\DeclareSymbolFont{AMSb}{U}{msb}{m}{n}
\DeclareSymbolFontAlphabet{\Bbb}{AMSb}
\newtheorem{theorem}{Theorem}[section]
\newtheorem{definition}[theorem]{Definition}
\newtheorem{lemma}[theorem]{Lemma}
\newtheorem{corollary}[theorem]{Corollary}
\newtheorem{proposition}[theorem]{Proposition}
\theoremstyle{definition}
\newtheorem{example}[theorem]{Example}
\newcommand{\R}{\mathbb{R}}
\newcommand{\N}{\mathbb{N}}
\begin{document}

\begin{center}
{\Large Finite switching near heteroclinic networks} \\
\mbox{} \\
\mbox{} \\
\begin{tabular}{ccc}
S.B.S.D.\ Castro$^{1,2,*}$ & and & L.\ Garrido-da-Silva$^{1}$ \\
(sdcastro@fep.up.pt)  & & (lilianagarridosilva@sapo.pt)\\
OrcID: 0000-0001-9029-6893 & & OrcID: 0000-0003-4294-3931
\end{tabular}
\end{center}

\vspace{1cm}
$^*$ Corresponding author: sdcastro@fep.up.pt;
phone: +351 225 571 100; fax: +351 225 505 050. 
\medskip

$^{1}$
Centro de Matem\'atica da Universidade do Porto (CMUP),
Rua do Campo Alegre 687,
4169-007 Porto,
Portugal. 
\medskip

$^{2}$
Faculdade de Economia da Universidade do Porto, and
Centro de Economia e Finan\c{c}as (Cef.UP),
Rua Dr.\ Roberto Frias,
4200-464 Porto,
Portugal. 
\vspace{1cm}

\begin{abstract}
We address the level of complexity that can be observed in the dynamics near a robust heteroclinic network. We show that infinite switching, which is a path towards chaos, does not exist near a heteroclinic network such that the eigenvalues of the Jacobian matrix at each node are all real. Furthermore, for a path starting at a node that belongs to more than one heteroclinic cycle, we find a bound for the number of such nodes that can exist in any such path.
This constricted dynamics is in stark contrast with examples in the literature of heteroclinic networks such that the eigenvalues of the Jacobian matrix at one node are complex.
\end{abstract}

{\bf Mathematics Subject Classification:} 34C37, 37C29, 91A22, 37D99

{\bf Keywords:} switching, heteroclinic cycle, heteroclinic network

\vspace{1.5cm}

\section{Introduction}

We are concerned with describing the level of complexity that can be achieved by the dynamics near a robust heteroclinic network. 
We work in the context of smooth vector fields in finite-dimensional Euclidean space. A heteroclinic network is a connected union of finitely many heteroclinic cycles, consisting of equilibria connected by heteroclinic trajectories. See Section~\ref{sec:preliminary} for precise definitions.
Authors such as\footnote{Our references in this section do not pretend to be comprehensive. They are rather meant as a starting point for the curious reader.} Aguiar {\em et al.} \cite{ACL2005, ACL2006}, Labouriau and Rodrigues \cite{LabRod2017, RodLab2014} prove the existence of chaotic dynamics near heteroclinic networks such that the eigenvalues of the Jacobian matrix at, at least, one node are complex. Chaotic dynamics are induced by the existence of infinite switching (see \cite{ACL2005} and Section~\ref{sec:preliminary}), that is, for any given sequence of heteroclinic connections in the heteroclinic network there exist initial conditions near the heteroclinic network whose trajectory remains close to that sequence. A natural question follows: that of knowing whether infinite switching can arise in heteroclinic networks such that the eigenvalues of the Jacobian matrix at every node are real. A negative answer, in particular cases, may be found in Garrido-da-Silva and Castro \cite{GSC_RSP} for the 2-person Rock-Scissors-Paper game; in Aguiar \cite{Aguiar2011} and Castro and Lohse \cite{CastroLohse2016} absence of infinite switching is proven for heteroclinic networks with a common connection between heteroclinic cycles. These latter results do not apply to heteroclinic networks with no common connection between heteroclinic cycles such as the bowtie network, as pointed out by \cite{CastroLohse2016}, although these authors do provide a hint at the absence of infinite switching in this network.
Here we address the general case. 

We prove that infinite switching does not occur near a heteroclinic network such that the eigenvalues of the Jacobian matrix at each node are all real, 
with the further common assumption that the dynamics along a heteroclinic connection, described by a global map, are a rescaled permutation.
The proofs use the geometry of the domains and their images under the local and global maps that make up the Poincar\'e return map to a cross section to the flow. 
We focus on the wide class of quasi-simple heteroclinic networks, defined in Section~\ref{sec:preliminary}, but our results apply to any heteroclinic network provided the geometric constrains we identify are satisfied. For the reader concerned with applications we stress that replicator dynamics in game theory and population dynamics naturally exhibits quasi-simple heteroclinic 
sub-networks\footnote{When one of the heteroclinic connections in a heteroclinic network contains more than one heteroclinic trajectory, we talk about a sub-network for each heteroclinic trajectory.}
satisfying the additional assumption on the global maps, 
as pointed out in Castro {\em et al.} \cite{CFGdSL}. Our results apply also to sub-networks of the ac-heteroclinic networks studied in Podvigina {\em et al.} \cite{PCL2020}, to the heteroclinic networks considered by Afraimovich {\em et al.} \cite{AfrMosYou}, as well as to all simple heteroclinic networks of types B, C, and Z that first appear in the work of Krupa and Melbourne \cite{KrupaMelbourne2004} for the first two types, and Podvigina \cite{Podvigina2012}, for the latter. See also Bick and Lohse \cite{BicLoh2019} for an example in the context of coupled cell networks.
Simple heteroclinic networks of type A do not satisfy our assumptions.
%

%
A heteroclinic network always has at least one node belonging to more than one of its heteroclinic cycles. Such a node we call a distribution node. We find a bound for the number of distribution nodes that can belong to any heteroclinic path that is followed by a trajectory near the heteroclinic network.
%
This also informs about how infinite switching fails to exist.

The absence of infinite switching is by no means synonymous with uninteresting dynamics as is patent, for example, in the work of Bick \cite{Bic2018}, Postlethwaite and Dawes \cite{PosDaw2010}, Rabinovich {\em et al.} \cite{Rab_etal2010}, and Sato {\em et al.} \cite{SAC}. Our results, while assertively answering the question of infinite switching near a heteroclinic network, open the way to the study of other interesting and possibly complicated dynamics. Our findings also inform the scientist in the quest of chaos in a dynamical system possessing a heteroclinic network such that no eigenvalue of the Jacobian matrix is complex that, if chaos is to be found, it will be at a distance from such a heteroclinic network.

This article proceeds as follows: the next section 
contains previous 
results and definitions. Section~\ref{sec:dynamics} is divided into three subsections addressing the geometry of the local and global maps between cross sections to the flow, the proof of absence of switching and constraints on switching, respectively, ending with some illustrative examples. 
In Section~\ref{sec:final} we make some relevant final remarks.

\section{Background}\label{sec:preliminary}

Let $f:\R^n \rightarrow \R^n$ be smooth and consider the dynamics described by the following ODE in $n \in \N$ state variables
\begin{equation}\label{eq:ODE}
\dot{x}=f(x).
\end{equation}

A \emph{heteroclinic cycle} \textbf{\textsf{C}} is a topological circle of hyperbolic saddle equilibria $\xi_j$, $j=1,\ldots,m$ ($m \in \N$) of 
system \eqref{eq:ODE} and heteroclinic connections $\kappa_{j,j+1}=\left[\xi_j \rightarrow \xi_{j+1}\right] \subseteq W^\text{u}(\xi_j) \cap W^\text{s}(\xi_{j+1}) \neq \emptyset$, where $W^\text{u}(\cdot)$ and $W^\text{s}(\cdot)$ denote the unstable and stable manifolds; and  $\xi_{m+1} =\xi_1$. Hereafter we also refer to~$\xi_j$ as \emph{nodes}.

It is well-known, see \cite{KrupaMelbourne2004}, that the heteroclinic cycle \textbf{\textsf{C}} is \emph{robust} if, for each $j=1,\cdots,m$, there exists a flow-invariant space $P_j \subset \R^n$ such that 
\begin{enumerate}
\item[(i)] $\kappa_{j,j+1}\subset P_j$, and
\item[(ii)] $\xi_j$ is a saddle and $\xi_{j+1}$ is a sink in~$P_{j}$.
\end{enumerate}

Let $\hat{L}_j \subset \R^n$ be the smallest flow-invariant space connecting $\xi_j$ to the origin. 
Let $X \ominus Y$ stand for the orthogonal complement of~$Y$ in~$X$.
Clearly, $\xi_j \in P_{j-1} \cap P_{j}$.
We divide the eigenvalues of the Jacobian $\textnormal{d}f(\xi_j)$ into four classes: \emph{radial} (eigenvectors belonging to $\hat{L}_j$), \emph{contracting} (eigenvectors belonging to 
$P_{j-1} \ominus \hat{L}_j$), \emph{expanding} (eigenvectors belonging to 
$P_j \ominus \hat{L}_j$) and \emph{transverse} (all others). 

The above classification of the eigenvalues is \emph{local} in nature as it refers to the role of each eigenvalue at a node.
We use the formulation local-contracting, etc, when we need to clarify that we
are working with 
the local classification.
When looking at the whole heteroclinic network we use the convention in \cite{PCL2020}: by global-contracting (resp.\ global-expanding) eigenvalues at $\xi_j$ in \textbf{\textsf{N}} we mean the contracting (resp.\ expanding) eigenvalues of all heteroclinic cycles that $\xi_j$ belongs to; by global-transverse eigenvalues at $\xi_j$ we mean the eigenvalues that are not radial, 
global-contracting or 
global-expanding for any heteroclinic cycle through $\xi_j$. In what follows 
the distinction between local-transverse eigenvalues and global-transverse eigenvalues is important. 
We refer mostly to the global classification of the eigenvalues and omit the word {\em global} when there is no risk of confusion.
We note that the sign of the global-transverse eigenvalues is not determined {\em a priori}.

For the reader familiar with the heteroclinic network studied by Kirk and Silber \cite{KS}, see Figure~\ref{fig:KS_ac}~(a) below, at $\xi_2$ the eigenvalue $e_{24}$ (resp.\ $e_{23}$) is local-transverse for the heteroclinic cycle containing $\xi_3$ (resp.\ $\xi_4$) but is global-expanding when considering the whole heteroclinic network. At $\xi_2$ there are no global-transverse eigenvalues. Either at $\xi_3$ or at $\xi_4$ the (unique) local-transverse eigenvalue is global-transverse.

\begin{definition}[Definition 2.1 in \cite{GdSC}]
A \emph{quasi-simple} cycle is a robust heteroclinic cycle \textbf{\textsf{C}} connecting $m\in\N$ equilibria 
$\xi_j \in P_{j-1} \cap P_{j}$
so that for all $j=1, \ldots, m$
\begin{itemize}
\item[(i)] $P_j$ is a flow-invariant space,
\item[(ii)] $\textnormal{dim}(P_j) =  \textnormal{dim}(P_{j+1})$,
\item[(iii)] $\textnormal{dim}(P_j \ominus \hat{L}_j) = 1$
\end{itemize}
\end{definition}

Each node in a quasi-simple heteroclinic cycle has a unique local-expanding and a unique local-contracting eigenvalue,  so both local-contracting and local-expanding eigenvalues are real. 
Heteroclinic connections are all one-dimensional and the number of local-transverse and radial eigenvalues are the same for all nodes. 

A (robust) \emph{heteroclinic network} \textbf{\textsf{N}} is a finite connected union of (robust) heteroclinic cycles.
We call \emph{quasi-simple heteroclinic network} to a heteroclinic network consisting of quasi-simple heteroclinic cycles.

Some heteroclinic cycles in \textbf{\textsf{N}} share at least one node. We say that a node 
having at least two outgoing connections
in \textbf{\textsf{N}} is a {\em distribution node}.

The behaviour of trajectories along the cycles near the heteroclinic network can produce increasingly complex dynamics that range from intermittency to chaos. An interesting phenomenon is characterised by the way how nearby trajectories visit parts of the heteroclinic network, which is known as \emph{switching}. 
A heteroclinic network can exhibit different forms of switching as described below.

Let \textbf{\textsf{N}} be a heteroclinic network with a finite set of nodes for the flow $\Phi_t$ generated by $f$.
Relabelling the indices if necessary, a (finite) \emph{heteroclinic path} on~\textbf{\textsf{N}} is a sequence $\left(\kappa_{j,j+1}\right)_{j\in\{1,\cdots,k\}}$ of $k \in \N$ heteroclinic connections in~\textbf{\textsf{N}} for which there is a sequence of nodes $\left(\xi_j\right)_{j\in\{1,\cdots,k+1\}}$ such that $\kappa_{j,j+1}=[\xi_j \rightarrow \xi_{j+1}]$.
An infinite heteroclinic path occurs when 
$k=\infty$.
Given an initial condition $x\in\R^n$, the trajectory $\Phi_t(x)$ \emph{follows} 
a heteroclinic path on \textbf{\textsf{N}} if,
for every neighbourhood $U$ of \textbf{\textsf{N}},
 there are sequences $(t_j)_{j\in\{0,\cdots,k\}}$, $(s_j)_{j\in\{1,\cdots,k\}}$ with $t_{j-1}<s_j<t_j$ such that $\Phi_{s_j}(x)$ belongs to a neighbourhood of a point $p_j \in \kappa_{j,j+1}$ contained in $U$ and $\Phi_{t_j}(x)$ belongs to a neighbourhood of $\xi_{j+1}$ contained in $U$.
See Figure~\ref{fig:path}.

\begin{figure}[!htb]
\centering
\includegraphics{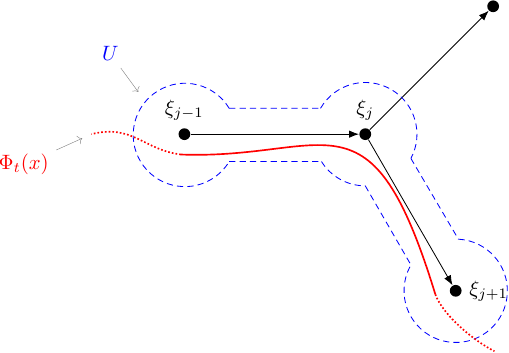}
\caption{ 
A trajectory following a heteroclinic path: given a neighbourhood $U$ of the heteroclinic path $\left[\xi_{j-1}\rightarrow \xi_j \rightarrow \xi_{j+1}\right]$, the trajectory $\Phi_t(x)$ follows it in $U$.}
\label{fig:path}
\end{figure}

\begin{definition}[\cite{ACL2005}]\label{def:swit-node}
We say there is \emph{switching at a node} $\xi_j$ of~\textbf{\textsf{N}} if, for any neighbourhood of a point in any connection leading to $\xi_j$ and sufficiently close to it, trajectories starting in that neighbourhood follow along all the possible connections forward from $\xi_j$.  
\end{definition}

\begin{definition}[\cite{ACL2005}]\label{def:swit-con}
We say there is \emph{switching along a heteroclinic connection} $\kappa_{i,j}$ of~\textbf{\textsf{N}} if, for any neighbourhood of a point in any connection leading to $\xi_i$ and sufficiently close to it, trajectories starting in that neighbourhood follow along $\kappa_{i,j}$ and then along all the possible connections forward from $\xi_j$.  
\end{definition}

\begin{definition}[\cite{ACL2005}]
We say there is \emph{finite} (resp. \emph{infinite}) \emph{switching} near~\textbf{\textsf{N}} if, for each finite (resp. infinite) heteroclinic path on \textbf{\textsf{N}}, there exists a trajectory that follows it.
\end{definition}

Of course, the occurence of infinite switching implies that of finite switching.
In addition, finite switching implies
switching along the heteroclinic connections, which in turn implies switching at all nodes of the heteroclinic network.  

The dynamics near \textbf{\textsf{N}} is investigated by composing local and global maps defined on suitable cross sections close to the equilibria. We use the notation from~\cite{CastroLohse2016}. For each~$\xi_j$ in~\textbf{\textsf{N}} let $H_j^{\text{in},i}$ be the section transverse to the incoming connection $\kappa_{i,j}$, and $H_j^{\text{out},k}$ be the section transverse to the outgoing connection $\kappa_{j,k}$. 
The \emph{local map} $\phi_{ijk}:H_j^{\text{in},i} \rightarrow H_j^{\text{out},k}$ approximates the flow near $\xi_j$ for points coming from $\xi_i$ and proceeding to $\xi_k$. The \emph{global map} $\psi_{jk}:H_j^{\text{out},k} \rightarrow H_k^{\text{in},j}$ approximates the flow along the heteroclinic connection~$\kappa_{j,k}$.

\section{Dynamics near a heteroclinic network}\label{sec:dynamics}

In this section, we show that heteroclinic networks, whose Jacobian matrices at the nodes have only real eigenvalues, do not exhibit infinite switching nearby. This is proved 
for the wide class of quasi-simple heteroclinic networks. In the first subsection we explore the geometry of the intersection of cusps in $\R^n$ and use this in the following subsection to prove our main result on the absence of infinite switching. The third, and last, subsection provides some 
constraints for the complexity of finite switching that may occur near such heteroclinic networks, 
by determining an upper bound for the number of distribution nodes in any heteroclinic path that is followed by a trajectory in the vicinity of the heteroclinic network.

\subsection{Geometry constraints}\label{sec:geometry}

We show how the existence of only real eigenvalues produces geometric constraints for the way trajectories of \eqref{eq:ODE} can evolve along a sequence of heteroclinic connections. To do this, we observe that the subsets of $H^{\text{in},i}_j$ and $H^{\text{out},k}_j$ 
that determine the direction trajectories take along heteroclinic connections are \emph{cusps}. Cusps may be \emph{thin} or \emph{thick} 
and the reader can
see their use in the context of switching in Castro and Lohse \cite[Definition~3.2]{CastroLohse2016}.  A broad definition of thin and thick cusps in $\R^2$ was provided by Podvigina and Lohse \cite[Definition~6]{PodviginaLohse2019}. We consider the latter definition when the line of reflection is a coordinate axis and extend it to $\R^n$ in the current setting. 

Here and subsequently, assume that $n\geq2$. For the reader's comfort, we reproduce Definition~6 in~\cite{PodviginaLohse2019} next.
Denote by $\ell(\cdot)$ the Lebesgue measure of a set in $\R^n$. 
For 
$\delta>0$ write 
$B_{\delta}$ for an 
$\delta$-neighbourhood of $\boldsymbol{0}\in\R^n$.  Given $\alpha>1$ and real numbers $a_1,a_2$
consider the following subset of~$\R^2$:
\[
V(a_1,a_2,\alpha) = \left\{(x_1,x_2) \in \R^2: \;\; |a_1x_1+a_2x_2| < \left( \max{\{|x_1|,|x_2|\}} \right)^\alpha \right\}.
\]

\begin{definition}[Definition 6 in \cite{PodviginaLohse2019}]
We say that $U\subset \R^2$ is a {\em thin cusp}, if
\begin{itemize}
	\item  $\ell(U \cap B_\delta) > 0$ for all $\delta > 0$;
	\item  there exist $\alpha > 1$ and $\delta > 0$ such that $U\cap B_\delta \subset V(a_1,a_2,\alpha)$, where at least one of $a_1$ and $a_2$ is not zero.
\end{itemize}
\end{definition}

The set $V(a_1,a_2,\alpha)$ has a symmetry axis that is the line $x_2=-a_1 x_1/a_2$. When this is a coordinate axis we have one of the following:  $V(0,a_2,\alpha)$ if the symmetry axis is the horizontal axis and $V(a_1,0,\alpha)$ if the symmetry axis is the vertical axis. In either case, we can simplify the set $V(a_1,a_2,\alpha)$ since, near the symmetry axis and close to the orig\text{in}, we know which of the coordinates has greatest absolute value. We thus write
\[
V(0,a_2,\alpha) = \left\{(x_1,x_2) \in \R^2: \;\; |a_2x_2| < |x_1|^\alpha \right\},
\]
and 
\[
V(a_1,0,\alpha) = \left\{(x_1,x_2) \in \R^2: \;\; |a_1x_1| < |x_2|^\alpha \right\}.
\]
See an illustration in Figure~\ref{fig:cuspsR2} which already presents the notation of Definition~\ref{def:thin-cusp}.

\begin{figure}[!htb]
\centering{}
\subfloat[]{\includegraphics[width=0.5\textwidth]{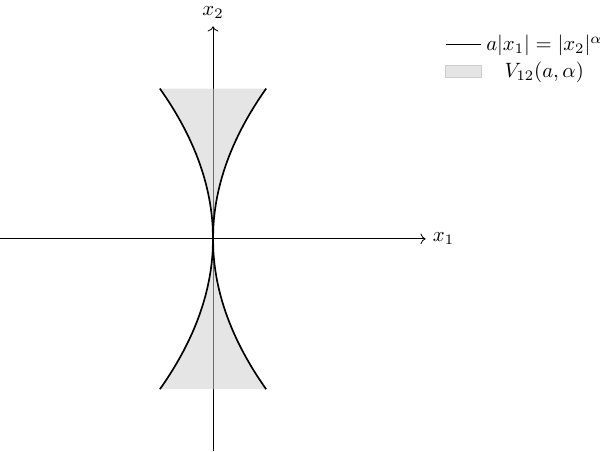}} 
\subfloat[]{\includegraphics[width=0.5\textwidth]{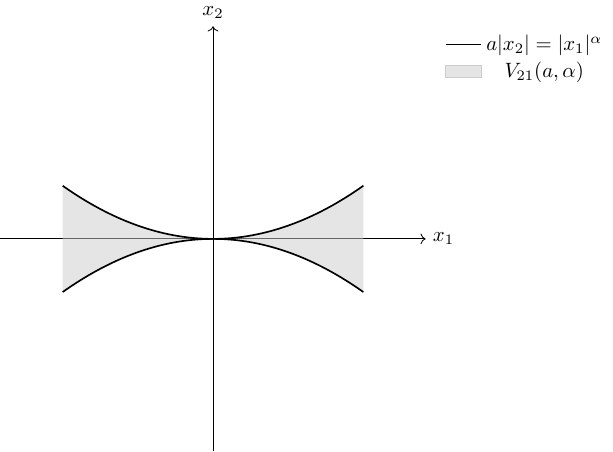}}
\caption{The two thin cusps (shaded region) in $\R^2$: (a)~$V_{21}(a, \alpha)$ and (b)~$V_{12}(a, \alpha)$. The unshaded region represents the corresponding thick cusps.}
\label{fig:cuspsR2}
\end{figure}

We generalise the definition of a thin cusp to $\R^n$, $n\geq 2$, by using the sets $V(0,a,\alpha)$ and $V(a,0,\alpha)$. This does not account for thin cusps whose symmetry axis is not one of the
coordinate axes
but suffices for our purposes.

Let $\boldsymbol{x}\equiv(x_1,\ldots,x_n)$ denote the Euclidean coordinates in $\R^n$ and
\[
\R_+^n = \left\{ \boldsymbol{x} \in \R^n: \; x_i \geq 0 \textnormal{ for all } i\right\}.
\]

\begin{definition}\label{def:thin-cusp}
A {\em thin cusp} is a subset of $\R^n$, close to the orig\text{in}, defined by 
$V_{ij}(a,\alpha) \cap B_\delta$ where
\[
V_{ij}(a,\alpha) = \left\{\boldsymbol{x} \in \R^n: \;\; 
a
|x_i| < |x_j|^\alpha \right\} 
\]
for some $0< \delta \ll 1$, $\alpha > 1$, 
$a > 0$ and $i \neq j$.
\end{definition}

The boundary of a thin cusp is a hypersurface
\[
S_{ij} (a,\alpha)= \left\{\boldsymbol{x} \in \R^n: \;\; a |x_i| = |x_j|^\alpha \right\}, 
\] 
for the same $i,j$ as in Definition~\ref{def:thin-cusp},
that is tangent to the coordinate hyperplane 
\[
\pi_i = \left\{ \boldsymbol{x}\in \R^n: \; x_i=0\right\}
\]
and symmetric about the axis 
\[
L_j =  \left\{ \boldsymbol{x} \in \R^n: \; x_k=0 \textnormal{ for all } k \neq j \right\}.
\]
In $\R^2$, the hyperplane $\pi_i$ and the axis $L_j$ naturally coincide for $i \neq j$.

The notation chosen for the thin cusp, $V_{ij}(a,\alpha)$, provides information both about which hyperplane it is tangent to and which is its symmetry line. Examples of the sets $V_{ij}(a,\alpha)$ are shown in Figure~\ref{fig:thin-cusps}. 

We define a {\em thick cusp} by reversing the inequality in the definition of a thin cusp in a slightly less generic way 
than~\cite{PodviginaLohse2019}, and write 
it $V^\text{c}_{ij}(a,\alpha)$ for short. With this definition thin and thick cusps come in pairs divided by the hypersurface $S_{ij}(a,\alpha)$ which is the boundary of the 
cusps $V_{ij}(a,\alpha)$ and $V^\text{c}_{ij}(a,\alpha)$.
In particular, 
thin and thick cusps in $\R^n$
can be defined by the connected components of $\R^n\backslash S_{ij}(a,\alpha)$.

\begin{figure}[!htb]
\centering{}
\subfloat[]{\includegraphics[scale=0.8]{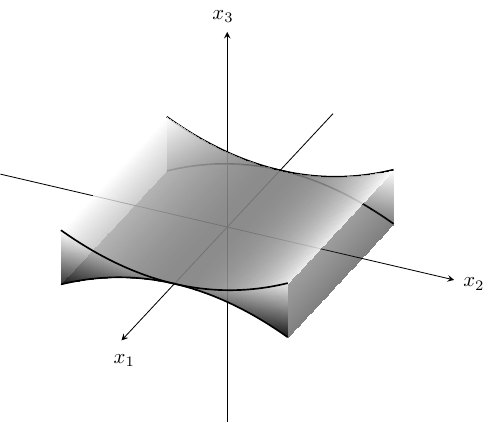}} \hfill
\subfloat[]{\includegraphics[scale=0.8]{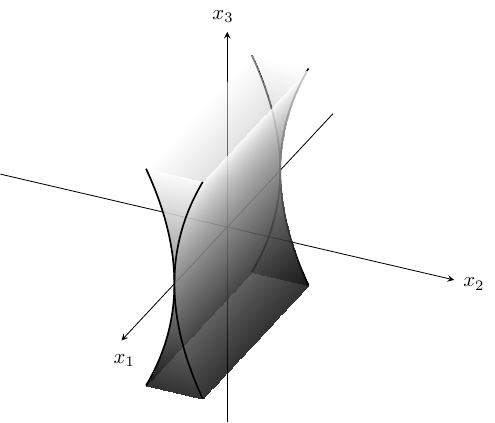}}
\caption{Examples of thin cusps (shaded region) in $\R^3$: (a)~$V_{32}(a, \alpha)$ and (b)~$V_{23}(a,\alpha)$.}
\label{fig:thin-cusps}
\end{figure}

The following results describe the intersection of groups of cusps focussing on when the intersection is empty. We say there is an {\em all-to-all intersection} of a group of cusps when each cusp intersects all the remaining th\text{in}, and corresponding thick, cusps. Obviously, a thin/thick cusp does not intersect its corresponding thick/thin cusp.

From now on we restrict the proofs to the positive orthant $\R_+^n$. 
By reflection the remaining orthants 
can be handled in much the same way.

\begin{lemma}\label{lem:2cusps}
Let there be two hypersurfaces $S_{ij}(a_i,\alpha)$ and $S_{kl}(a_k,\beta)$ defining two pairs of thin/thick cusps in $\R^n$. Then, for $B_{\delta}$~sufficiently small, two of the cusps defined by these hypersurfaces do not intersect if and only if either $i=k$ and $j=l$, or $i=l$ and $j=k$. 
\end{lemma}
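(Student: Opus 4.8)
The plan is to use that, after restricting to $\R_+^n$ as in the text, a thin cusp $V_{ij}(a,\alpha)$ and its thick partner $V^{\text{c}}_{ij}(a,\alpha)$ only involve the two coordinates $x_i$ and $x_j$, the remaining $n-2$ coordinates being unconstrained. Thus everything is controlled by how the index pairs $\{i,j\}$ and $\{k,l\}$ overlap, and it is convenient to set $m=|\{i,j\}\cup\{k,l\}|\in\{2,3,4\}$ and to work in the $m$ coordinates that actually appear, choosing the other $n-m$ coordinates arbitrarily small and positive. Observe that $m=2$ is exactly the situation ``$i=k$ and $j=l$'' or ``$i=l$ and $j=k$'', so the statement amounts to: a non-intersecting pair exists if and only if $m=2$. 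I would prove the two implications separately.

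For the ``if'' direction, suppose $m=2$. If $i=k$ and $j=l$, the two hypersurfaces are $S_{ij}(a_i,\alpha)$ and $S_{ij}(a_k,\beta)$; comparing $x_j^\alpha/a_i$ with $x_j^\beta/a_k$ for $x_j$ small, one checks that on $B_\delta\cap\R_+^n$ with $\delta$ small enough one of the two thin cusps is contained in the other — the one with the larger exponent, or, when $\alpha=\beta$, the one with the larger coefficient. The thin cusp lying inside then has empty intersection with the thick cusp partnered with the outer one, so two of the four cusps do not intersect. If instead $i=l$ and $j=k$, the relevant thin cusps are $V_{ij}(a_i,\alpha)$ and $V_{ji}(a_k,\beta)$; chaining their defining inequalities $a_i x_i<x_j^\alpha$ and $a_k x_j<x_i^\beta$ yields $a_i a_k^\alpha\, x_i<x_i^{\alpha\beta}$, that is $a_i a_k^\alpha<x_i^{\alpha\beta-1}$, which fails for $x_i$ small because $\alpha\beta>1$. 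Hence these two thin cusps are disjoint on a sufficiently small $B_\delta$.

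For the ``only if'' direction, suppose $\{i,j\}\neq\{k,l\}$, so $m\in\{3,4\}$, and I must exhibit an all-to-all intersection, i.e.\ show that each of the four cross-pairs thin/thin, thin/thick, thick/thin, thick/thick is non-empty near $\boldsymbol{0}$. When $m=4$ the two constraints act on disjoint pairs of coordinates, so any combination is realised by choosing $(x_i,x_j)$ and $(x_k,x_l)$ independently and small. When $m=3$ exactly one index is shared, and it enters each cusp either in the ``tangency'' slot (first subscript) or in the ``symmetry-axis'' slot (second subscript); up to swapping the two hypersurfaces this gives three configurations (shared index tangency-type in both, symmetry-type in both, or one of each). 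In each configuration, and for each thin/thick choice, I would produce a point of the intersection inside $B_\delta$ by a cascade of scales: take the coordinate occurring only in a symmetry-axis slot, or one that is free on one side, to be a small parameter $\varepsilon>0$, and set each of the remaining two coordinates to a suitable power of $\varepsilon$, arranged so that both polynomial inequalities — $a_i x_i$ against $x_j^\alpha$, and $a_k x_k$ against $x_l^\beta$, each with the direction dictated by the thin/thick choice — hold while all coordinates stay in $B_\delta$; since at least one of the two inequalities effectively decouples, such an ordering of exponents always exists.

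I expect the genuine work to be the $m=3$ case of the ``only if'' direction: enumerating the handful of ways a single index can be shared, keeping track of which side of each inequality is required, and checking that the cascade of exponents can always be arranged. In particular one must confirm that no further exceptional non-intersecting configuration hides among the thin/thin intersections, where the two inequalities chain together — they do so harmlessly precisely because, when $\{i,j\}\neq\{k,l\}$, at least one coordinate is free on one side. Everything else reduces to elementary comparisons of monomials near the origin.
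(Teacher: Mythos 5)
Your proof is correct and follows essentially the paper's own route: in the coincident-pair case ($i=k$, $j=l$) you use the same nesting of thin cusps (larger exponent, or larger coefficient when the exponents agree), in the swapped case ($i=l$, $j=k$) your chained inequality $a_i a_k^{\alpha} < x_i^{\alpha\beta-1}$, impossible near the origin since $\alpha\beta>1$, is just an algebraic rendering of the paper's ``geometrically opposed'' thin cusps, and for non-matching index pairs both you and the paper rest the all-to-all intersection on the presence of coordinates not constrained by both inequalities. The only difference is that the paper dispatches this last point in one sentence (via counting the variables in the system of the two hypersurface equations), whereas you defer the $m=3$ enumeration; since in every such configuration each inequality retains a private coordinate, so the chain of constraints terminates and the scales can be chosen in cascade, this is routine monomial bookkeeping rather than a genuine gap.
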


\begin{proof}
The intersection of $S_{ij}(a_i,\alpha)$ with $S_{kl}(a_k,\beta)$ in~$\R_+^n$ is described by
\begin{equation}
\left\{ 
\begin{aligned}
a_i x_i & = x_j^{\alpha} \\
a_k x_k & =x_l^{\beta}.
\end{aligned}
\right.
\label{eq:intS}
\end{equation}
The system of equations \eqref{eq:intS} has at most four variables. 
If it has more than two variables then at least one of the variables $x_i$, $x_j$, $x_k$ or $x_l$ can take any real value and all cusps intersect. 

Given the definition of a cusp, two is the minimum number of variables of  \eqref{eq:intS} and it has exactly two variables when either $i=k$ and $j=l$, or $i=l$ and $j=k$. We show that, in either of these instances, at least two cusps do not intersect.

Consider that $(x_i,x_j)=(x_k,x_l)$. It means that the thin cusps $V_{ij}(a_i,\alpha)$ and $V_{kl}(a_k,\beta)$ have the same tangency hyperplane and 
symmetry axis. If $\alpha>\beta$, then 
$V_{kl}(a_k,\beta)$ contains $V_{ij}(a_i,\alpha)$ in every $B_\delta$
for $\delta>0$ sufficiently small. Therefore, the thin cusp $V_{ij}(a_i,\alpha)$ does not intersect the thick cusp 
$V^\text{c}_{kl}(a_k,\beta)$.

When $(x_i,x_j)=(x_l,x_k)$ the thin cusps $V_{ij}(a_i,\alpha)$ and $V_{kl}(a_k,\beta)$ are geometrically opposed within the $(x_i,x_j)$-subspace. While the former is arbitrarily close to the $x_i$-axis the latter is arbitrarily close to the $x_j$-axis in $B_{\delta}$. Hence, the thin cusps do not intersect near the origin.
\end{proof}

Geometrically, the solutions of \eqref{eq:intS} in $B_\delta$ describe the $(n-2)$-dimensional coordinate subspace $\left\{ \boldsymbol{x} \in \R^n: \; x_i=x_j=0 \right\}$ 
whenever the coordinates 
coincide, i.e., $(x_i,x_j)=(x_k,x_l)$ or $(x_i,x_j)=(x_l,x_k)$, for small $\delta>0$. 
The condition $i=k$ and $j=l$ corresponds to cusps that are tangent to the same coordinate hyperplane 
and symmetric about the same axis. They are nested and therefore at least one thin cusp does not intersect one thick cusp as depicted in Figure~\ref{fig:2-cusps-intersect-1}. 
The condition $i=l$ and $j=k$ is illustrated in Figure~\ref{fig:2-cusps-intersect-2}.
Otherwise, the cusps defined by $S_{ij}(a_i,\alpha)$ and $S_{kl}(a_l,\beta)$ naturally intersect all-to-all near the origin. We obtain an $(n-2)$-dimensional nontrivial hypersurface containing the orig\text{in}, see Figure~\ref{fig:2-cusps-intersect-3}. Panel (b) of Figures~\ref{fig:2-cusps-intersect-1}-\ref{fig:2-cusps-intersect-3} restricts to a neighbourhood in the positive orthant.

\begin{figure}[!htb]
\centering{}
\subfloat[$\R^3$]{\includegraphics[scale=0.85]{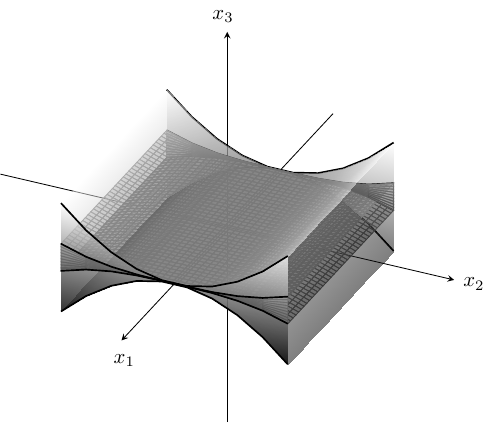}} \hfill
\subfloat[$B_\delta \cap \R^3_+$]{\includegraphics[scale=0.6]{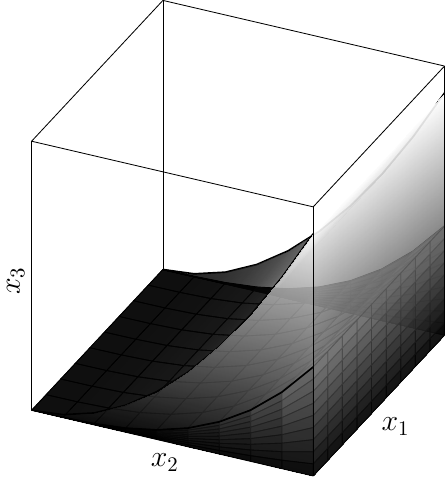}}
\caption{In $\R^3$ the two thin cusps $V_{32}(a, \alpha)$ (shaded) and $V_{32}(b, \beta)$ (grid) are nested and the corresponding thick cusp $V^{\text{c}}_{32}(a, \alpha)$ (unshaded) does not intersect $V_{32}(b, \beta)$.
}
\label{fig:2-cusps-intersect-1}
\end{figure}

\begin{figure}[!htb]
\centering{}
\subfloat[$\R^3$]{\includegraphics[scale=0.85]{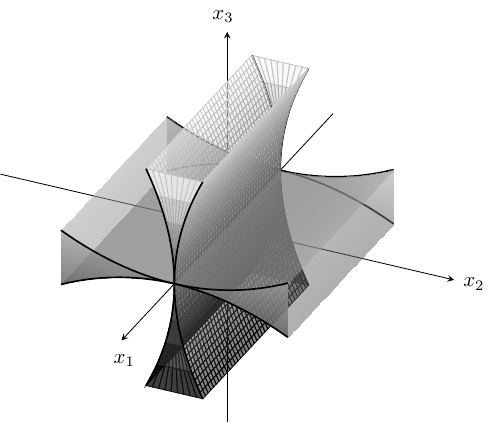}} \hfill
\subfloat[$B_\delta \cap \R^3_+$]{\includegraphics[scale=0.6]{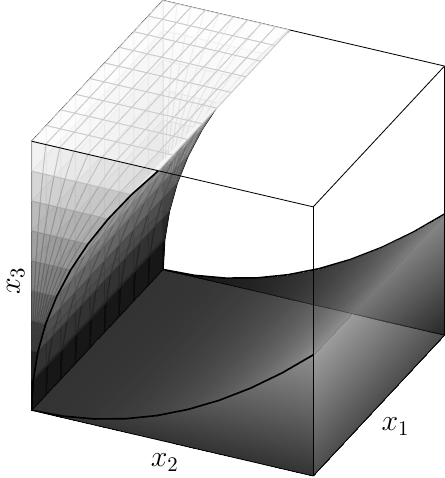}}
\caption{In $\R^3$~the two thin cusps $V_{23}(a, \alpha)$ (shaded) and $V_{32}(a, \alpha)$ (grid) do not intersect near the origin.}
\label{fig:2-cusps-intersect-2}
\end{figure}

\begin{figure}[!htb]
\centering{}
\subfloat[$\R^3$]{\includegraphics[scale=0.85]{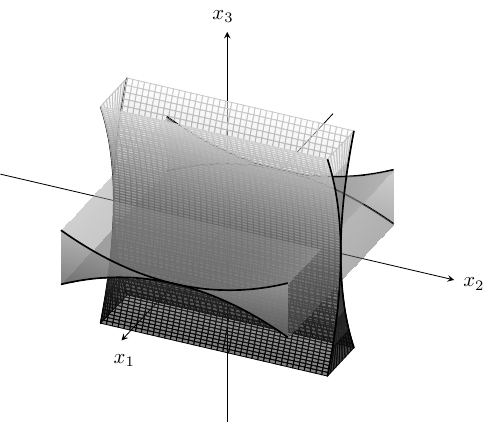}} \hfill
\subfloat[$B_\delta \cap \R^3_+$]{\includegraphics[scale=0.6]{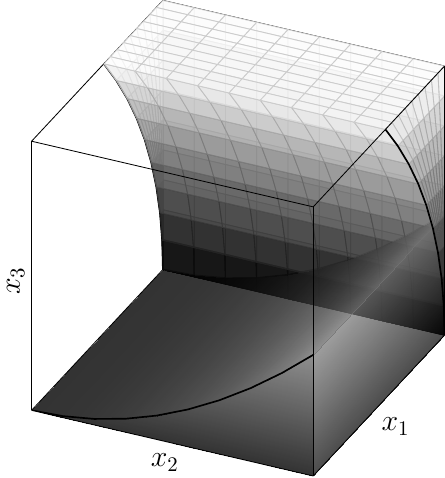}}
\caption{In $\R^3$ the two thin cusps $V_{32}(a, \alpha)$ (shaded) and $V_{13}(a, \alpha)$ (grid) and the corresponding thick cusps intersect near the origin.}
\label{fig:2-cusps-intersect-3}
\end{figure}

\begin{corollary}\label{lem:cusps}
Consider pairs of thin/thick cusps in $\R^n$ defined by $1+n(n-1)/2$ hypersurfaces $S_{ij}(a_i,\alpha_j)$ tangent to 
as many hyperplanes $\pi_i$. Then, for $B_{\delta}$~sufficiently small, at least two of the cusps do not intersect.
\end{corollary}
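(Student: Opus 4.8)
The plan is to read this off from Lemma~\ref{lem:2cusps} by a pigeonhole argument on index pairs. Lemma~\ref{lem:2cusps} tells us precisely when non-intersection occurs: two cusps defined by hypersurfaces $S_{ij}(a_i,\alpha)$ and $S_{kl}(a_k,\beta)$ fail to meet inside a sufficiently small $B_\delta$ exactly when the unordered index sets coincide, $\{i,j\}=\{k,l\}$; in every other situation all of the thin/thick cusps attached to the two hypersurfaces meet all-to-all near the origin. Consequently, a family of cusps arising from hypersurfaces $S_{i_s j_s}$ can intersect all-to-all near $\boldsymbol{0}$ only if the unordered pairs $\{i_s,j_s\}$ are pairwise distinct.

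I would then combine this with the elementary count that $\{1,\ldots,n\}$ has only $\binom{n}{2}=n(n-1)/2$ two-element subsets. Hence among the $1+n(n-1)/2$ hypersurfaces $S_{ij}(a_i,\alpha_j)$ of the statement, the pigeonhole principle forces two of them to share the same unordered index set $\{i,j\}$. If these two hypersurfaces coincide there is nothing to prove, since a thin cusp never meets its corresponding thick cusp; otherwise, because the parameters of $S_{ij}(a_i,\alpha_j)$ are determined by its indices, the two hypersurfaces must be $S_{ij}(a_i,\alpha_j)$ and $S_{ji}(a_j,\alpha_i)$. Applying Lemma~\ref{lem:2cusps} to this single pair --- in the positive orthant $\R_+^n$, the remaining orthants following by reflection as noted above, and with $\delta$ taken as small as the lemma requires --- we conclude that the two opposed thin cusps $V_{ij}(a_i,\alpha_j)$ and $V_{ji}(a_j,\alpha_i)$ do not intersect in $B_\delta$, which is the asserted conclusion. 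The same count also shows that $1+n(n-1)/2$ is sharp: with only $n(n-1)/2$ hypersurfaces one can keep all the unordered index sets distinct, and then Lemma~\ref{lem:2cusps} yields an all-to-all intersection.

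I do not expect a genuine obstacle here. The geometric content is entirely carried by Lemma~\ref{lem:2cusps}, and what remains is the counting of unordered pairs together with a single application of that lemma. The one point worth stating carefully is the uniformity of $\delta$: because we invoke Lemma~\ref{lem:2cusps} for just one pair of hypersurfaces, no intersection of several shrinking neighbourhoods is needed, and the $\delta$ supplied by the lemma for the colliding pair is the $\delta$ appearing in the conclusion.
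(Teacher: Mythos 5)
Your proof is correct and follows essentially the same route as the paper's own: a pigeonhole count over the $n(n-1)/2$ unordered coordinate pairs forces two of the $1+n(n-1)/2$ hypersurfaces to be defined by the same two coordinates, and Lemma~\ref{lem:2cusps} then supplies two non-intersecting cusps. One small remark: when the repeated pair occurs with the same \emph{ordered} indices you need not argue that the two hypersurfaces coincide (the notation does not really force this); the nested case of Lemma~\ref{lem:2cusps} already yields a thin cusp missing the other's thick cusp, so the conclusion stands in either reading.
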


\begin{proof}
The proof follows from that of Lemma~\ref{lem:2cusps} since the number of 
pairs of thin/thick cusps exceeds by one the total
number of 
distinct coordinate planes in $\R^n$ given by the possible combinations of $n$ dimensions taken two at a time, i.e.
$C^n_2=n(n-1)/2$. Hence, at least two of the hypersurfaces $S_{ij}(a_i,\alpha_j)$ are defined by the same two coordinates.
\end{proof}

\begin{proposition} \label{lem:n-cusps}
In $\R^n$, $n$ pairs of cusps never intersect all-to-all for $B_{\delta}$~sufficiently small.
\end{proposition}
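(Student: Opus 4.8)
The plan is to reduce Proposition~\ref{lem:n-cusps} to a counting argument about which pairs of coordinates appear in the defining hypersurfaces, using Lemma~\ref{lem:2cusps} as the basic obstruction to intersection. Suppose, for contradiction, that we have $n$ pairs of thin/thick cusps in $\R^n$ defined by hypersurfaces $S_{i_1 j_1}(a_1,\alpha_1), \ldots, S_{i_n j_n}(a_n,\alpha_n)$ that do intersect all-to-all for every sufficiently small $B_\delta$. By Lemma~\ref{lem:2cusps}, two cusps defined by $S_{ij}$ and $S_{kl}$ fail to intersect precisely when $\{i,j\}=\{k,l\}$ as unordered pairs; hence, if all pairs intersect all-to-all, the $n$ unordered index pairs $\{i_1,j_1\},\ldots,\{i_n,j_n\}$ must all be \emph{distinct}. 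Since each such pair is a $2$-element subset of $\{1,\ldots,n\}$ and there are $C^n_2 = n(n-1)/2$ of them, we record that $n \leq n(n-1)/2$, which already forces $n \geq 3$; so the interesting range is $n\ge 3$, and for $n\le 2$ the statement is immediate (two cusps with $n=2$ necessarily share the same coordinate pair, hence by Lemma~\ref{lem:2cusps} a thin/thick pair among them does not intersect).

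The heart of the argument, then, is that distinctness of the index pairs is \emph{not} enough: even an all-to-all intersecting family must satisfy a stronger structural constraint coming from the orientation data encoded in $V_{ij}$ versus $V_{ji}$ and the relative sizes of the exponents, and with $n$ hypersurfaces this constraint cannot be met. I would argue as follows. First, project onto pairs of coordinates: for any two indices $p,q$, intersecting the whole configuration with the $(x_p,x_q)$-plane and using the argument in the proof of Lemma~\ref{lem:2cusps} (near the symmetry axis one knows which coordinate dominates), each hypersurface $S_{i_r j_r}$ with $\{i_r,j_r\}\subseteq\{p,q\}$ restricts to a genuine planar cusp boundary, while the others restrict to coordinate hyperplanes. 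Second, build a directed graph $G$ on the vertex set $\{1,\ldots,n\}$ with an edge $i_r \to j_r$ for each hypersurface $V_{i_r j_r}(a_r,\alpha_r)$ (recall $V_{ij}$ is tangent to $\pi_i$ and symmetric about $L_j$). Since $G$ has $n$ vertices and $n$ edges, it contains a cycle. I would show that along such a directed cycle the all-to-all intersection condition propagates an inequality on the exponents $\alpha_r$ that closes up inconsistently — concretely, traversing the cycle forces $\alpha_{r_1} \cdots$ to be simultaneously ``too large'' and ``too small'', analogously to how in Lemma~\ref{lem:2cusps} the case $(x_i,x_j)=(x_l,x_k)$ already kills the intersection because the two cusps hug opposite axes. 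The geometric content is that a consistent all-to-all configuration would require the directed graph to be acyclic, i.e.\ to embed in a linear order, which is impossible with $n$ edges on $n$ vertices.

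I expect the main obstacle to be exactly this last step: making precise the sense in which an all-to-all intersecting pair of cusps $V_{ij}(a,\alpha)$ and $V_{jk}(b,\beta)$ (sharing the index $j$ but with opposite roles) imposes a usable inequality relating $\alpha$, $\beta$, and the behaviour of the shared coordinate $x_j$ near the origin, and then checking that these inequalities are genuinely cyclic-inconsistent rather than merely restrictive. The subtlety is that two cusps can share \emph{one} index in either of two ways (head-to-tail or head-to-head or tail-to-tail), and only some of these configurations are obstructed; one must verify that in any family realizing all-to-all intersection the induced digraph is forced to be a disjoint union of paths (a ``linear'' or transitive-tournament-like structure), which caps the edge count at $n-1$. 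Once that combinatorial reduction is in hand, the contradiction with having $n$ edges is immediate, and the cases $n\le 2$ are handled directly as above, completing the proof. I would close by noting this is the sharp bound: one can exhibit $n-1$ pairs of cusps (e.g.\ $V_{12}, V_{23}, \ldots, V_{n-1,n}$ with suitably increasing exponents) that do intersect all-to-all, matching Corollary~\ref{lem:cusps}.
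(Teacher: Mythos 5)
Your opening reduction is sound and matches the paper's first step: by Lemma~\ref{lem:2cusps}, an all-to-all configuration must use $n$ distinct unordered index pairs, and $n$ edges on $n$ vertices force a cycle. But the step you yourself flag as the main obstacle --- that the intersection conditions around such a cycle are inconsistent --- is exactly the content of Proposition~\ref{lem:n-cusps}, and your proposal does not prove it; it only conjectures a mechanism, and that mechanism does not match how the obstruction actually works. Note first that, by Lemma~\ref{lem:2cusps}, cusps with distinct index pairs always intersect \emph{pairwise}, so the proposition is necessarily about the \emph{simultaneous} intersection of suitable thin/thick selections from the pairs; your claim that the digraph must be acyclic is a statement about that simultaneous intersection and cannot be extracted from pairwise information plus an unproven ``cyclic inconsistency of exponents''. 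Second, the cycle guaranteed by the edge count is a cycle of the underlying undirected graph, not necessarily a directed one, and the two orientations behave differently. In the directed-cycle case (e.g.\ $S_{ji}(a,\alpha)$, $S_{kj}(b,\beta)$, $S_{ik}(c,\gamma)$ in $\R^3$) there is no exponent inconsistency to exploit: eliminating variables reduces the system to two planar cusps with exponents $\alpha>1$ and $\beta\gamma>1$ hugging transverse axes, so the three \emph{thin} cusps already have empty common intersection unconditionally, by the two-dimensional base case. In the head-to-head/tail-to-tail orientation (two hypersurfaces tangent to the same hyperplane, e.g.\ $S_{ji}(a,\alpha)$, $S_{jk}(b,\beta)$, $S_{ik}(c,\gamma)$), which your sketch does not treat, the failing selection involves \emph{thick} cusps and depends on whether $\beta<\alpha\gamma$ or $\beta>\alpha\gamma$; this case split is where the real work lies, and it shows the obstruction is not ``cycles are forbidden'' in your sense but rather ``for every cyclic configuration some thin/thick selection has empty common intersection''.

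Two further points. For general $n$ your suggestion to ``project onto pairs of coordinates'' is not developed; the paper instead solves the system of $n$ hypersurface equations by elimination, which reduces the configuration to a lower-dimensional one and allows induction on $n$ --- some such reduction is needed, since a cusp restricted to a coordinate plane not containing its symmetry axis degenerates. Also, your structural claim is too strong even if the main step were proved: a family realizing all-to-all intersection need only have forest-like index structure, not a disjoint union of paths (a star such as $V_{12},V_{13},V_{14}$ intersects simultaneously in every thin/thick combination), although the resulting bound of $n-1$ hypersurfaces is the same. As it stands, then, the proposal sets up the right combinatorics but leaves the decisive geometric step --- the one the paper carries out by explicit elimination and the $\beta\lessgtr\alpha\gamma$ case analysis --- unproven.
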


\begin{proof}

We proceed by induction on $n$. The result is immediately true for $n=2$.
We write the proof for three hypersurfaces defining three pairs of cusps in $\R^3$ as it provides the essential arguments. Given that there are three coordinates, $S_{ij}(a_i,\alpha_j)$ must be such that $i, j \in \{ 1,2,3 \}$. 
If the two indices of any two 
hypersurfaces are repeated, then Lemma~\ref{lem:2cusps} applies, finishing the proof. 

For the general case, there are either no repeated first indices or no repeated second indices. 
It means that every two pairwise cusps have different tangency hyperplanes and symmetry axes.
The respective three hypersurfaces $S_{j i}(a,\alpha)$, $S_{k j}(b,\beta)$ and $S_{i k}(c,\gamma)$ in $\R^3_+$ are represented by the equations:
\[
\left\{ 
\begin{aligned}
a x_j & = x_i^{\alpha} \\
b x_k & = x_j^{\beta} \\
c x_i & = x_k^{\gamma}.
\end{aligned}
\right. 
\]
The system can be reduced to two variables such that
\[
\left\{ 
\begin{aligned}
a x_j & = x_i^\alpha \\
c \, b^{\gamma} x_i & =  x_j^{\beta \gamma}.
\end{aligned}
\right. 
\]
Since $1 / \alpha < 1$ and $\beta \gamma> 1$, we know that, within the $(x_i,x_j)$-subspace, the two thin cusps 
$a x_j < x_i^\alpha$ and $c \, b^{\gamma} x_i <  x_j^{\beta \gamma}$ 
do not intersect near the origin from the base step. Substituting independently $b x_k =  x_j^\beta$ and $c x_i = x_k^{\gamma}$ into $c \, b^{\gamma} x_i <  x_j^{\beta \gamma}$, we get $c x_i <  x_k^{\gamma}$ and $b x_k <  x_j^{\beta}$, respectively. Hence, the simultaneous intersection of three thin cusps $V_{j i}(a,\alpha)$, $V_{k j}(b,\beta)$ and $V_{i k}(c,\gamma)$ in the original space $\R_+^3$ is empty for sufficiently close to the origin.
See Figure~\ref{fig:n-pair-cusps-1}.

\begin{figure}[!htb]
\centering{}
\includegraphics[scale=0.65]{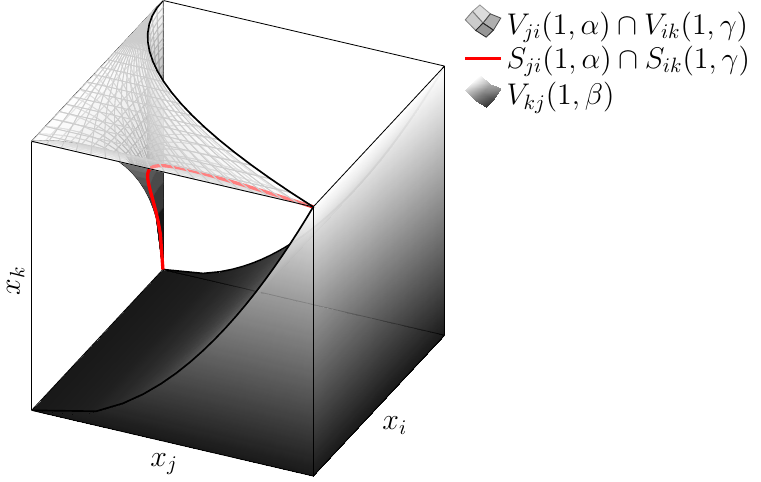}
\caption{In $\R^3$ the three thin cusps $V_{j i}(a,\alpha)$, $V_{k j}(b,\beta)$ and $V_{i k}(c,\gamma)$ do not intersect near the origin $(a=b=c=1)$.}
\label{fig:n-pair-cusps-1}
\end{figure}

Suppose now that the first indices coincide for two hypersurfaces, so they are tangent to the same hyperplane, say $x_j=0$. 
If pairs of indices do not repeat, then two hypersurfaces share the same symmetry axis, say $x_k$. The system of equations for the respective hypersurfaces $S_{j i}(a,\alpha)$, $S_{j k}(b,\beta)$ and $S_{i k}(c,\gamma)$ in $\R^3_+$ is:
\[
\left\{ 
\begin{aligned}
a x_j & = x_i^{\alpha} \\
b x_j & = x_k^{\beta} \\
c x_i & = x_k^{\gamma},
\end{aligned}
\right. 
\]
which can be reduced to 
\[
\left\{ 
\begin{aligned}
a x_j & = x_i^{\alpha} \\
c^{-1} b  \, x_j & = x_i^{\beta / \gamma}.
\end{aligned}
\right. 
\]
We need consider two cases:
\begin{enumerate}
\item[(i)] $\beta < \alpha \gamma$. 
In the $(x_i,x_j)$-subspace, the base step establishes that the two cusps $a x_j < x_i^{\alpha}$ and $c^{-1} b  \, x_j > x_i^{\beta / \gamma} $
do not intersect near the origin.\footnote{If $\beta>\gamma$, then $ c^{-1} b  \, x_j > x_i^{\beta / \gamma} $ defines a thick cusp. Otherwise, we have the thin cusp $c \, b^{-\gamma / \beta} x_i < x_j^{\gamma / \beta}$ .} Substituting independently $b x_j  = x_k^{\beta}$ and $c x_i = x_k^{\gamma}$ into 
$ c^{-1} b  \, x_j > x_i^{\beta / \gamma} $, we get $c x_i < x_k^{\gamma}$ and $ b x_j > x_k^{\beta}$. Hence, the simultaneous intersection of two thin cusps $V_{ji}(a,\alpha)$ and $V_{ik}(c,\gamma)$ with one thick cusp $V^\text{c}_{jk}(b,\beta)$
in the original space $\R^3_+$ is empty sufficiently close to the origin. See Figure~\ref{fig:n-pair-cusps-2}.

\begin{figure}[!htb]
\centering{}
\includegraphics[scale=0.65]{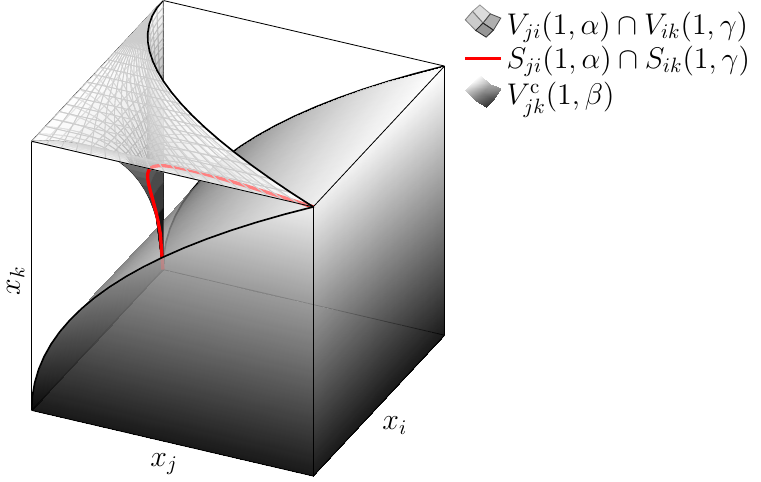}
\caption{In $\R^3$ the three cusps $V_{j i}(a,\alpha)$, $V^{\text{c}}_{jk}(b,\beta)$ and $V_{i k}(c,\gamma)$ do not intersect near the origin in $\R^3$ if $\beta<\alpha \gamma$ $(a=b=c=1)$.}
\label{fig:n-pair-cusps-2}
\end{figure}

\item[(ii)] $\beta > \alpha \gamma$.
In the $(x_i,x_j)$-subspace, the base step establishes that the two cusps $a x_j > x_i^{\alpha}$ and 
$ c^{-1} b  \, x_j < x_i^{\beta / \gamma} $ do not intersect near the origin. Substituting independently $b x_j  = x_k^{\beta}$ and $c x_i = x_k^{\gamma}$ into 
$ c^{-1} b  \, x_j < x_i^{\beta / \gamma} $, we get $c x_i > x_k^{\gamma}$ and $ b x_j < x_k^{\beta}$. Hence, the simultaneous intersection of two thick cusps $V^{\text{c}}_{ji}(a,\alpha)$ and $V^{\text{c}}_{ik}(c,\gamma)$ with one thin cusp $V_{jk}(b,\beta)$ 
in the original space $\R^3_+$ is empty sufficiently close to the origin. See Figure~\ref{fig:n-pair-cusps-3}.

\begin{figure}[!htb]
\centering{}
\includegraphics[scale=0.65]{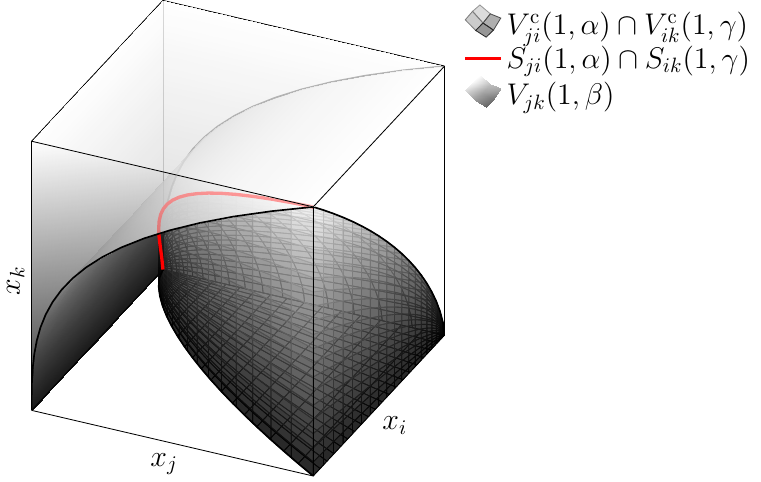}
\caption{In $\R^3$ the three cusps $V^{\text{c}}_{j i}(a,\alpha)$, $V_{jk}(b,\beta)$ and $V^{\text{c}}_{i k}(c,\gamma)$ do not intersect near the origin in $\R^3$ if $\beta>\alpha \gamma$ $(a=b=c=1)$.}
\label{fig:n-pair-cusps-3}
\end{figure}
\end{enumerate}

In higher dimensions, assume that the result holds for all $2 \leq n \leq m$, where $m\geq2$.
For $n=m+1$, we solve the system defined by $m+1$ arbitrary hypersurfaces by means of the elimination method. This allows us to restrict to a subspace of dimension lower than $m+1$ and apply the inductive hypothesis.
\end{proof}

Note that Proposition~\ref{lem:n-cusps} provides a bound on the number of intersecting pairs of cusps which is at least as good as Corollary~\ref{lem:cusps}, and strictly better if $n \geq 3$.
The hypotheses used in the two statements are different.

\subsection{Absence of infinite switching}\label{sec:no-infinite-switching}

For a quasi-simple heteroclinic cycle the expression for local and global maps was derived in~\cite{GdSC}. We give here a brief description in the context of quasi-simple heteroclinic networks. Let $\xi_j$ be a node in a quasi-simple heteroclinic network~\textbf{\textsf{N}}. 
The contracting eigenvalues\footnote{In what follows, unless stated otherwise, the eigenvalues are classified globally.} of $\textnormal{d}f(\xi_j)$ are denoted by $-c_{j,s}<0$, $s=1,\ldots,n_{c_j}$, the expanding ones by $e_{j,p}>0$, $p=1,\ldots,n_{e_j}$, and the transverse ones\footnote{Without loss of generality, all (globally) transverse eigenvalues are assumed to be negative. A necessary condition for the asymptotic stability of a heteroclinic network is that all transverse eigenvalues are negative. The existence of positive transverse eigenvalues provides a way for trajectories to escape a neighbourhood of the heteroclinic network, making switching unlikely for reasons other than the one we are interested in, namely, the real nature of the eigenvalues.} by $-t_{j,q} < 0$, $q=1,\ldots,n_{t_j}$. Of course, $n_{c_j} \geq 1$, $n_{e_j} \geq 1$ and $n_{t_j} \geq 0$. Accordingly, there are $n_{c_j}$~incoming connections to~$\xi_j$ and $n_{e_j}$~outgoing connections from~$\xi_j$. 

The definition of a quasi-simple heteroclinic network ensures that all cross sections to the flow along its heteroclinic connections have the same dimension, which in turn can be restricted to $\R^N$ with 
\begin{equation}
\label{eq:dim-cross}
N=n_{c_j}+n_{e_j}+n_{t_j}-1.
\end{equation}

Assume that $-c_{j,1}$ and $e_{j,1}$ are respectively the 
local-contracting and local-expanding eigenvalues 
at $\xi_j$ regarding the heteroclinic path $\left[ \xi_{i} \rightarrow \xi_j \rightarrow \xi_{k} \right]$. 
Let $v$, $w$, $\boldsymbol{z}$ be the local coordinates in the basis comprised of the associated local-contracting, local-expanding and local-transverse eigenvectors. In particular, $v$ and $w$ are 1-dimensional while $\boldsymbol{z}$ is 
$(N-1)$-dimensional. The vector $\boldsymbol{z}$ is divided into three subsets of coordinates as $\boldsymbol{z} \equiv (\boldsymbol{z}_c, \boldsymbol{z}_e, \boldsymbol{z}_t)$ 
representing respectively contracting, expanding and transverse directions. 

We use the coordinates $\left(w^{\text{in}},\boldsymbol{z}^{\text{in}}\right)$ in~$H_j^{\text{in},i}$ and $ \left(v^{\text{out}},\boldsymbol{z}^{\text{out}}\right)$ in~$H_j^{\text{out},k}$ so that
\[
\begin{aligned}
H_j^{\text{in},i} & \equiv \left\{\left(w^{\text{in}},\boldsymbol{z}^{\text{in}}\right) \in \R^N: \,\, 0 \leq \max_{l=1,\ldots,N-1} \{ w^{\text{in}}, z_l^{\text{in}} \} <1 \right\} \\
H_j^{\text{out},k} & \equiv \left\{\left(v^{\text{out}},\boldsymbol{z}^{\text{out}}\right) \in \R^N: \,\, 0 \leq \max_{l=1,\ldots,N-1} \{ v^{\text{out}}, z_l^{\text{out}} \} <1 \right\}.
\end{aligned}
\]
As usual, the local map 
$\phi_{ijk}:H_j^{\text{in},i} \rightarrow H_j^{\text{out},k}$ 
near $\xi_j$ is
calculated using the linearisation of the flow near $\xi_j$ and 
written down to leading order as
\[
\begin{aligned}
\left(v^{\text{out}},\boldsymbol{z}^{\text{out}}\right) & = \phi_{ijk}\left(w^{\text{in}},\boldsymbol{z}^{\text{in}}\right) \\
& = \left((w^{\text{in}})^{\frac{c_{j,1}}{e_{j,1}}}, \Big( z^{\text{in}}_{c,s} (w^{\text{in}})^{\frac{c_{j,s}}{e_{j,1}}}, z^{\text{in}}_{e,p} (w^{\text{in}})^{-\frac{e_{j,p}}{e_{j,1}}}, z^{\text{in}}_{t,q} (w^{\text{in}})^{\frac{t_{j,q}}{e_{j,1}}} \Big) \right) \\
\end{aligned}
\]
for $s=2,\ldots,n_{c_j}$, $p=2,\ldots,n_{e_j}$ and $q=1\ldots,n_{t_j}$.
More precisely, the domain of definition of $\phi_{ijk}$ is given by\footnote{The domain of $\phi_{ijk}$ is the set of $(w^{\text{in}},\boldsymbol{z}^{\text{in}}) \in H_j^{\text{in},i}$ for which $\phi_{ijk}(w^{\text{in}},\boldsymbol{z}^{\text{in}}) \in H_j^{\text{out},k}$. This is obtained by imposing that all the coordinates of $\phi_{ijk}$ are smaller than 1. Strictly speaking, there will be points in $H_i^{\text{in},j}$ that are excluded from the domain of $\phi_{ijk}$ (see, e.g., \cite{KS}). For simplicity, we ignore them because they do not enter into significant terms of shadow trajectories.} 
\[
C_{ijk} = \left\{ \left(w^{\text{in}},\boldsymbol{z}^{\text{in}}\right) \in H_j^{\text{in},i}: \,\,  z^{\text{in}}_{e,p} < (w^{\text{in}})^{\frac{e_{j,p}}{e_{j,1}}} \textnormal{ for all } p=2,\ldots,n_{e_j}\right\} 
 \]
and its image is given by\footnote{
The image of $\phi_{ijk}$ is the set of $(v^{\text{out}}, \boldsymbol{z}^{\text{out}}) \in H_j^{\text{out},k}$ whose pre-image under $\phi_{ijk}$ is in $C_{ijk}$.}
\[
\begin{aligned}[c] F_{ijk} =\bigg\{ \left(v^{\text{out}},\boldsymbol{z}^{\text{out}}\right) \in H_j^{\text{out},k}: & \,\,  z^{\text{out}}_{c,s} < (v^{\text{out}})^{\frac{c_{j,s}}{c_{j,1}}} \textnormal{ for all } s=2,\ldots,n_{c_j} \\
\text{ and } & \,\,  z^{\text{out}}_{t,q} < (v^{\text{out}})^{\frac{t_{j,q}}{c_{j,1}}} \textnormal{ for all } q=1,\ldots,n_{t_j} 
\bigg\}.
\end{aligned}
\]
If $n_{e_j} =1$, then $C_{ijk} = H_j^{\text{in},i}$. If $n_{c_j} = 1$ and $n_{t_j}=0$, then $F_{ijk}=H_j^{\text{out},k}$.

We see that the sets $C_{ijk}$ and $F_{ijk}$ are, respectively, the intersection of $n_{e_j}-1$ and $n_{c_j} + n_{t_j} -1$ cusps, 
thick or thin depending on the ratio of 
eigenvalues.  The symmetry axis in $C_{ijk}$ is $w^{\text{in}}$ and that in $F_{ijk}$ is $v^{\text{out}}$.
The cross sections $H_j^{\text{in},i}$ and $H_j^{\text{out},k}$ 
contain
pairs of cusps. Intersections of some of these cusps form $C_{ijk}$ and $F_{ijk}$, respectively. 
They delineate the heteroclinic paths that go through the node $\xi_j$ in the heteroclinic network.


\begin{figure}[!htb]
\centering
\includegraphics{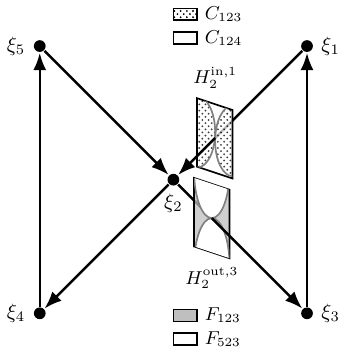}
\caption{The bowtie network from \cite{CastroLohse2016}. The sets $C_{i2k}$ are in $H_2^{\text{in},1}$ and $F_{i23}$ are in $H_2^{\text{out},3}$.}
\label{fig:bowtie}
\end{figure}

As an illustration, consider the bowtie network in \cite{CastroLohse2016} depicted in Figure~\ref{fig:bowtie}: near $\xi_2$ the set $C_{123}$ 
describes the set of points following the heteroclinic path $\left[ \xi_{1} \rightarrow \xi_2 \rightarrow \xi_{3} \right]$, whereas its complement in $H_2^{\text{in},1}$, $C_{124}$, 
describes those points that follow $\left[ \xi_{1} \rightarrow \xi_2 \rightarrow \xi_{4} \right]$. Analogously, 
$F_{123}$ 
describes the set of points that follow the heteroclinic path $\left[ \xi_{1} \rightarrow \xi_2 \rightarrow \xi_{3} \right]$,  
whereas its complement in~$H_2^{\text{out},3}$,~$F_{523}$, 
describes those points that follow 
$\left[ \xi_{5} \rightarrow \xi_2 \rightarrow \xi_{3} \right]$.

While the cross sections $H_j^{\text{in},i}$ are always the union of pairs of sets $C_{ijk}$ (except for very thin cusps containing points that are removed from near the heteroclinic network), this holds for $H_j^{\text{out},k}$ and the sets $F_{ijk}$ only if $n_{t_j}=0$. Consider, for example, the house network from \cite{CastroLohse2016}, studied in \cite{tese} from which we reproduce Figures~4.12 
and~4.13~(b) in Figure~\ref{fig:House}.
\footnote{
Comparing with Figure~\ref{fig:House}~(b), the sets $F_{i12}$ in Figure~6 in~\cite{CastroLohse2016} are not well-defined. 
The results in \cite{tese} correct Lemma 3.7 in \cite{CastroLohse2016}, showing that there is no switching near the house network. This correction also follows from Proposition~\ref{prop:swit-con} below.}
Here we have $n_{t_1}=1>0$ at the node $\xi_1$ so that the outgoing cross section $H_1^{\text{out},2}$ is more than the union of the sets $F_{i12}$. The points in $H_{1}^{\text{out},2}\setminus F_{312} \cup F_{512}$ do not remain close to the network.

\begin{figure}[!htb]
\centering
\subfloat[]{\includegraphics{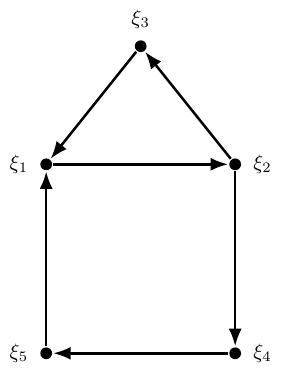}}
\hfill
\subfloat[]{\includegraphics[scale=0.65]{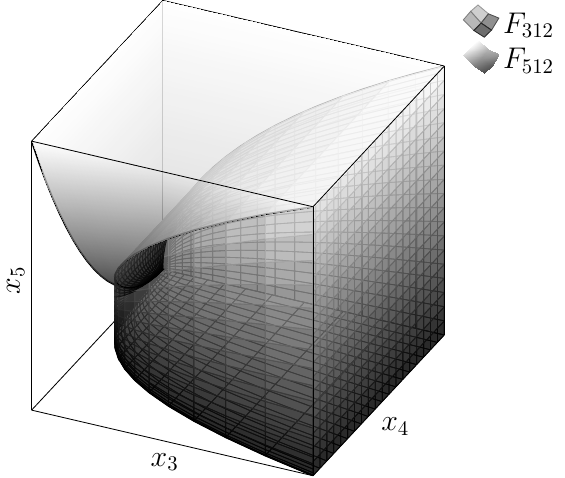}}
\caption{(a) The house network from \cite{CastroLohse2016}. (b) The cross section $H_1^{\text{out},2}$ is not the union of the sets $F_{312}$ and $F_{512}$.}
\label{fig:House}
\end{figure}

As in \cite{GdSC} we assume that the quasi-simple heteroclinic networks are such that the global map $\psi_{jk}:H_j^{\text{out},k} \rightarrow H_k^{\text{in},j}$ along each heteroclinic connection $\left[ \xi_j \rightarrow \xi_{k} \right]$ is a rescaled permutation. 
Accordingly, every cusp is mapped by $\psi_{jk}$ into a new cusp.

The nature of the sets $C_{ijk}$, as well as that of their images, determines the absence of infinite switching as we prove next.

\begin{theorem}\label{th:no-switching}
In quasi-simple heteroclinic networks such that at all nodes the Jacobian matrix has only real eigenvalues there is no infinite switching.
\end{theorem}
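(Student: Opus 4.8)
The plan is to track, along an arbitrary heteroclinic path, the set of initial conditions in a cross section whose trajectories follow that path, and show that after finitely many steps this set must become empty, so no trajectory can follow every infinite path; hence infinite switching fails. Fix a cross section $H_{j_0}^{\text{in},i_0}$ at the first node and let $\mathcal D_0$ be the (nonempty) subset of points following the prescribed path. The key observation is that at each node $\xi_{j}$ the admissible set is obtained by intersecting the current domain with a set $C_{i_{j}j_{j}k_{j}}$, which by the discussion preceding the theorem is an intersection of cusps (thin or thick) with symmetry axis the local-expanding coordinate $w^{in}$; then applying the local map $\phi$ (which sends cusps to cusps and, crucially, reparametrises so that the symmetry axis of the image cusps in $F_{i_{j}j_{j}k_{j}}$ becomes $v^{out}$, the local-contracting coordinate); then applying the global map $\psi$, which is a rescaled permutation and therefore sends each cusp to another cusp, permuting which coordinate is the symmetry axis. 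The heart of the argument is that along the path one accumulates more and more cusp constraints in a fixed-dimensional section $\R^N$, and once the number of independent pairs of cusps exceeds what $\R^N$ can support all-to-all, Proposition~\ref{lem:n-cusps} (or Corollary~\ref{lem:cusps}) forces the intersection to be empty for $B_\delta$ sufficiently small.

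Concretely, I would proceed as follows. First, set up notation: write $\Pi_\ell = \psi_{j_\ell,k_\ell}\circ\phi_{i_\ell,j_\ell,k_\ell}$ for the return map across the $\ell$-th node, and let $\mathcal D_\ell = \Pi_\ell(\mathcal D_{\ell-1}\cap C_{i_\ell j_\ell k_\ell})$ be the set of points in the $(\ell{+}1)$-th incoming section still following the path. Second, prove the invariance lemma: each $\mathcal D_\ell$, if nonempty, is (up to the fixed $B_\delta$) a finite intersection of cusps $V_{ab}(\cdot,\cdot)$ or $V^{\text c}_{ab}(\cdot,\cdot)$ in $\R^N$; this follows because $C_{i_\ell j_\ell k_\ell}$ is such an intersection, $\phi$ maps each defining cusp to a cusp (one checks the monomial form of $\phi$ turns $a|x_a|<|x_b|^\alpha$ into another inequality of the same type after substituting the power-law expressions for the output coordinates, with the symmetry axis moving from $w$ to $v$), and $\psi$ being a rescaled coordinate permutation preserves the class of cusps while permuting indices. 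Third, bookkeeping on the indices: a distribution node contributes at least one genuine cusp constraint (since it has $n_{e}\geq 2$ outgoing or $n_c+n_t\geq 2$), and because the section has fixed dimension $N$, after at most $N$ passages through distribution nodes one has at least $N$ pairs of cusps present; a path that realises infinite switching must revisit distribution nodes infinitely often (otherwise it is eventually a single cycle and switching is irrelevant), so this threshold is reached. Fourth, invoke Proposition~\ref{lem:n-cusps}: $N$ pairs of cusps in $\R^N$ cannot intersect all-to-all in any sufficiently small $B_\delta$, so $\mathcal D_\ell=\emptyset$ for some finite $\ell$, contradicting the existence of a trajectory following the whole infinite path.

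The main obstacle I anticipate is the bookkeeping in the third step: one must argue carefully that the cusps accumulated along the path are "genuinely new" — i.e. that they are not all of the nested or coordinate-repeating types that Lemma~\ref{lem:2cusps} already handles trivially, and that after the coordinate permutations induced by successive global maps $\psi$ one really does exhaust the $C^N_2$ distinct coordinate planes. Equivalently, one must show the constraints cannot forever "pile up" on the same pair of coordinates in a way that keeps the intersection nonempty. This is where the quasi-simple hypothesis (one-dimensional expanding/contracting directions, uniform counts of transverse and radial eigenvalues at every node, real eigenvalues so that $\phi$ really has the stated monomial form) is essential, and where one should track precisely which coordinate plays the role of the symmetry axis after each $\phi$ and each $\psi$. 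A secondary technical point is uniformity of $\delta$: Proposition~\ref{lem:n-cusps} gives emptiness for $B_\delta$ small, and one must check the relevant $\delta$ can be chosen uniformly (or that the maps contract neighbourhoods so that one is automatically inside a small enough ball after finitely many steps), which follows from the contracting nature of the local maps near the saddles.
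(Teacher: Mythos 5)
Your ingredients are the right ones --- the sets $C_{ijk}$, $F_{ijk}$ are intersections of cusps, the local maps (monomial, because all eigenvalues are real) and the global maps (rescaled permutations) send cusps to cusps, and Proposition~\ref{lem:n-cusps} in the fixed $N$-dimensional cross section is the engine --- but the logical structure of your argument has a genuine gap. You propose to show that for an arbitrary infinite path the admissible set $\mathcal{D}_\ell$ becomes empty after finitely many steps. That statement is false, and it is also not what the theorem needs. A path that winds forever around one heteroclinic cycle through a distribution node revisits that distribution node infinitely often, yet the cusps it accumulates in $H_j^{\text{in},i}$ are nested (same tangency hyperplane, same symmetry axis), their intersection remains a nonempty cusp in every $B_\delta$, and such a path is in fact followed by trajectories whenever that cycle attracts. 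This is precisely the scenario you flag as your ``main obstacle'' (constraints piling up on the same pair of coordinates); it is not a bookkeeping difficulty to be overcome but a counterexample to the claim your strategy tries to prove. Relatedly, your fourth step misapplies Proposition~\ref{lem:n-cusps}: it forbids the \emph{all-to-all} intersection of $N$ pairs of thin/thick cusps, i.e.\ the simultaneous realizability of all choices; it says nothing about the intersection of one particular selection of cusps along one path being empty (nested thin cusps intersect without obstruction).

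The quantifier must go the other way. Infinite switching requires that \emph{every} infinite path be followable, hence that at a distribution node the cusps making up the sets $C_{ijk}$ covering $H_j^{\text{in},i}$, together with their images under arbitrarily many full return maps to that section, intersect all-to-all: every combination of past and future itineraries must be realised by some point of the section. Since all eigenvalues are real, those images are again intersections of cusps in the same $N$-dimensional section, so infinite switching would force arbitrarily many pairs of cusps to intersect all-to-all, which Proposition~\ref{lem:n-cusps} rules out once their number reaches $N$. This is the paper's (short) proof. Your sets $\mathcal{D}_\ell$ are still useful --- they identify which individual paths are followable --- but to obtain the theorem you must exhibit the failure of the all-to-all intersection demanded by the definition of switching, not emptiness of $\mathcal{D}_\ell$ for every path.
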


\begin{proof}
Let $\xi_j$ be a distribution node. Then, $H_j^{\text{in},i}$ is made of at least one pair of cusps with symmetry axis $w^{\text{in}}$. Recall that the sets $C_{ijk}$ covering $H_j^{\text{in},i}$ are the intersection of a number of cusps which depends on the number of outgoing connections from the distribution node. Since all eigenvalues are real, the image of the sets $C_{ijk}$ in $H_j^{\text{in},i}$ under the full return map to $H_j^{\text{in},i}$ is again the intersection of cusps.

Using Proposition~\ref{lem:n-cusps}, we know that only a finite number of cusps, limited by the dimension of $H_j^{\text{in},i}$, can have all-to-all intersection. 
The fact that infinite switching requires an all-to-all intersection of an infinite number of cusps finishes the proof. 
\end{proof}

An immediate consequence is that, because no infinite switching occurs, chaotic behaviour cannot be observed near a quasi-simple heteroclinic network. If the heteroclinic network is asymptotically stable, then no chaos exists for the dynamics since all trajectories are attracted to the network. On the other hand, when the heteroclinic network is not asymptotically stable, chaos may appear only at a distance from the network.

Note that the assumption about the sign of transverse eigenvalues only affects the definitions of the sets $C_{ijk}$ and $F_{ijk}$.
Negative transverse eigenvalues are necessary if we want to admit asymptotic stability of the heteroclinic network. If there exists one positive transverse eigenvalue then the unstable manifold of the corresponding equilibrium is not contained in the heteroclinic network. In this case, by Theorem 2.8 in \cite{PCL2020}, the heteroclinic network is not asymptotically stable, making infinite switching less likely (if its existence were possible).  

\subsection{Constraints on switching}\label{sec:switching}

Having proved the absence of infinite switching, we now 
specify in more detail ways by which infinite switching does not occur. In particular, a closer look at the geometry of local and global maps near a sequence of heteroclinic connections provides a description of how rich the switching dynamics can be. Namely, it provides an upper bound for the number of 
distribution nodes that exist in a heteroclinic path
that may be followed from a given node, as a function of the dimension of the state space and the number of transverse eigenvalues.

The next result shows that when the number of either incoming or outgoing connections at a node $\xi_j$ equals the dimension of the cross sections, it is not possible to return to near $\xi_j$ and leave a neighbourhood of $\xi_j$ in all possible combinations of incoming and outgoing paths. 
This yields an alternative proof of the absence of switching in the Kirk and Silber network
to~\cite{CastroLohse2016}: 
the cross sections are 2-dimensional, at the node $\xi_1$ it is $n_{c_1}=2$ and at $\xi_2$ we have $n_{e_2}=2$.
An example in $\R^5$ is the ac-heteroclinic network from~\cite{PCL2020},
where at $\xi_3$ it is $n_{c_3}=3$.
An example in $\R^3$ is given in~\cite{PeiRod2022},
where 2-dimensional cross sections are introduced at every node such that either $n_{c_j}=2$ or $n_{e_j}=2$. See Figure~\ref{fig:KS_ac}.

\begin{figure}[!htb]
\centering
\subfloat[]{\includegraphics{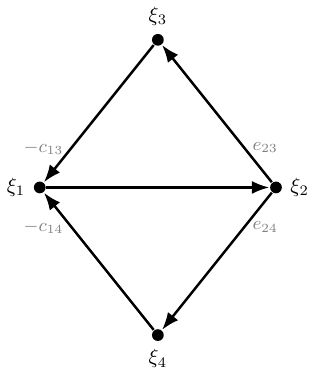}} 
\hfill
\subfloat[]{\includegraphics{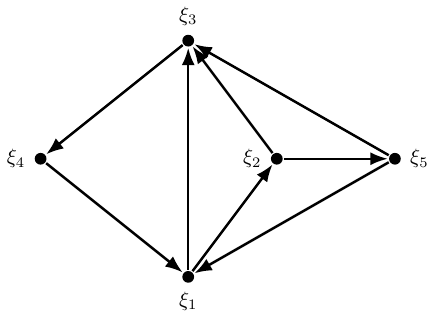}} 

\subfloat[]{\includegraphics{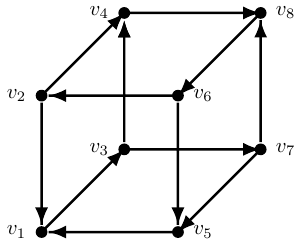}} 
\caption{The heteroclinic networks (a) from~\cite{KS}, (b) from~\cite{PCL2020} and (c)~from~\cite{PeiRod2022}.}
\label{fig:KS_ac}
\end{figure}

Recall that $N$ represents the dimension of the cross sections to the flow. 

\begin{proposition}\label{prop:swit-node}
For a quasi-simple heteroclinic network \textbf{\textsf{N}}  in $\R^n$, if there exists some node~$\xi_j$ such that either $n_{c_j}=N$ or $n_{e_j}=N$, then there is no (finite or infinite) switching near \textbf{\textsf{N}}.
\end{proposition}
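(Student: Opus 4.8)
The plan is to reduce the statement to a counting argument about cusps and then invoke the geometry results from Section~\ref{sec:geometry}. Suppose, for contradiction, that there is a node $\xi_j$ with $n_{c_j}=N$ (the case $n_{e_j}=N$ is symmetric, exchanging the roles of $C_{ijk}$ and $F_{ijk}$, or equivalently reversing time) and that switching near $\textbf{\textsf{N}}$ does occur. First I would recall that $N=n_{c_j}+n_{e_j}+n_{t_j}-1$, so the hypothesis $n_{c_j}=N$ forces $n_{e_j}+n_{t_j}=1$; since $n_{e_j}\geq 1$, this means $n_{e_j}=1$ and $n_{t_j}=0$. Hence $C_{ijk}=H_j^{\text{in},i}$ for the unique outgoing direction, and $F_{ijk}=H_j^{\text{out},k}$ as well — the local map is onto the full cross section, so no constraint is lost at $\xi_j$ itself from the outgoing side.

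The heart of the argument is on the incoming side. There are $n_{c_j}=N$ incoming connections $\kappa_{i,j}$, and switching (at the node $\xi_j$, which is implied by switching near $\textbf{\textsf{N}}$) would require that for every incoming connection, trajectories arriving along it can be followed out along the single outgoing connection and, crucially, that all $N$ incoming connections can be realized. I would track what the image of the section $H_k^{\text{out},\cdot}$ (for $\xi_k$ the predecessor) looks like after the global map $\psi$ and the local map at $\xi_j$: each global map is a rescaled permutation, so it carries cusps to cusps, and the composition with the local maps produces, inside any single cross section, a family of cusps — one pair of thin/thick cusps for each of the $N$ incoming connections whose histories we are trying to simultaneously realize. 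For switching to hold, these $N$ pairs of cusps must have an all-to-all intersection in every $B_\delta$. But $H_j^{\text{in},i}\cong\R^N$, and by Proposition~\ref{lem:n-cusps}, $N$ pairs of cusps in $\R^N$ never intersect all-to-all for $B_\delta$ sufficiently small. This contradiction yields the claim.

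The step I expect to be the main obstacle is the bookkeeping that turns "switching near $\textbf{\textsf{N}}$" into "all-to-all intersection of $N$ pairs of cusps in a single $\R^N$ cross section." One has to argue that the relevant obstruction genuinely lives in one cross section of dimension $N$ rather than being spread across several sections of the network, and that the $N$ cusp-pairs arising from the $N$ distinct incoming (resp. outgoing) connections are distinct — i.e. defined by genuinely different coordinate pairs or exponents — so that Proposition~\ref{lem:n-cusps} applies with the full count $N$. Here the quasi-simple structure is what saves us: each node has a one-dimensional local-expanding and local-contracting direction, heteroclinic connections are one-dimensional, the $z$-coordinates are permuted by the global maps, and the cusps $C_{ijk}$ (resp. $F_{ijk}$) are pinned to coordinate axes, so the $N$ incoming directions index $N$ distinct coordinate directions in the $N$-dimensional section. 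I would also need to handle carefully the degenerate subcase where some incoming and outgoing indices at intermediate nodes coincide, but Lemma~\ref{lem:2cusps} already disposes of any such coincidence by giving an empty intersection even earlier. Once the cusps are correctly identified, the conclusion is immediate from Proposition~\ref{lem:n-cusps}.
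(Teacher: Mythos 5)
Your reduction to the cusp count has a genuine gap, and it sits exactly where you predicted: the passage from ``switching near \textbf{\textsf{N}}'' to ``all-to-all intersection of $N$ cusp pairs in a single cross section''. First, a factual slip: with $n_{c_j}=N$, $n_{e_j}=1$, $n_{t_j}=0$ you claim $F_{ijk}=H_j^{\text{out},k}$, but that identity requires $n_{c_j}=1$; here each $F_{ijk}$ is the intersection of $n_{c_j}-1=N-1$ cusps, a proper subset of $H_j^{\text{out},k}$, and these $N$ sets (one per incoming connection) are precisely the objects that carry the history information. More seriously, the requirement you then impose --- that the $N$ cusp pairs attached to the $N$ incoming connections intersect all-to-all --- is not implied by any of the switching definitions. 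Different histories are realised by different initial conditions, not by a common point, so the sets $F_{ijk}$ (or their transported copies in any one section) may be pairwise disjoint while switching still holds; nothing in Definitions~\ref{def:swit-node}--\ref{def:swit-con} forces them to overlap. Consequently the contradiction with Proposition~\ref{lem:n-cusps} does not follow from your premises.

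What switching actually forces is a bipartite condition: the forward image of each ``past'' set must meet each ``future'' set. Since at $\xi_j$ there is only one outgoing connection, the futures only branch at later distribution nodes, so one must compose with the flow until the trajectory returns --- this is the step your argument omits and the step the paper supplies. The paper's proof (written for $n_{e_j}=N$, the case $n_{c_j}=N$ being analogous on $H_j^{\text{out},k}$) takes the $N$ sets covering the cross section together with their images under a full return map along any cycle; because all eigenvalues are real and global maps are rescaled permutations, these images are again intersections of cusps, and switching would require $2N>N$ cusp pairs to intersect in an $N$-dimensional section, which Proposition~\ref{lem:n-cusps} forbids. Your counting of the cusps as distinct (pinned to distinct coordinate directions by the quasi-simple structure) is fine, but without routing the argument through the return map --- or through pullbacks along paths that revisit $\xi_j$ --- the intersection requirement you invoke is simply not a consequence of switching, so the proof as written does not close.
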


\begin{proof}
Assume that $n_{e_j}=N$. The proof for $n_{c_j}=N$ is analogous by looking at $H_j^{\text{out},k}$.

By virtue of~\eqref{eq:dim-cross} we have $ n_{c_j} + n_{t_j} = 1$. A quasi-simple heteroclinic network admits $n_{c_j} \geq 1$, which implies $n_{t_j}=0$.
The cross section $H_j^{\text{in},i}$ consists in 
$n_{e_j}=N$ pairs of cusps. We look at points in $H_j^{\text{in},i}$ and their images under a full return map, along any heteroclinic cycle, to this cross section.
The existence of infinite switching demands that the 
$n_{e_j}=N$ pairs of cusps in $H_j^{\text{in},i}$ intersect with their images, that is, there is infinite switching only if 
$2n_{e_j}=2N \ (>N)$ pairs of cusps intersect in $H_j^{\text{in},i}$. Given that the dimension of $H_j^{\text{in},i}$ is equal to $N$, it follows from Proposition~\ref{lem:n-cusps} that this is impossible, since for a quasi-simple heteroclinic network the dimension of a cross section is always greater than~1.
\end{proof}

From 
the previous proof, we obtain a 
thorough description of the possible choices of paths around a heteroclinic cycle in the heteroclinic network:
if a distribution node satisfies one of the hypotheses of Proposition~\ref{prop:swit-node}, then after 
at most one return to this node it is no longer possible to proceed along all available outgoing paths. 

The next result shows that, when two heteroclinic cycles in the heteroclinic network 
have a common connection $\left[ \xi_1 \rightarrow \xi_2 \right]$ and the number of incoming and outgoing connections from $\left[ \xi_1 \rightarrow \xi_2 \right]$ is at least the dimension of the cross sections, there is no switching along the common connection. Its corollary extends the absence of switching to a common sequence of connections. This provides yet another proof of the absence of switching in the Kirk and Silber network of \cite{KS}, this time along 
the 
connection shared by
the two heteroclinic cycles.
Other examples may be found in the heteroclinic network of the 2-person Rock-Scissors-Paper (RSP) game studied
in \cite{GSC_RSP} and in the heteroclinic network of the Rock-Scissors-Paper-Lizard-Spock (RSPLS) game
of \cite{CFGdSL}, 
see Figure~\ref{fig:RSP}.

\begin{proposition}\label{prop:swit-con}
Suppose that, in a quasi-simple heteroclinic network \textbf{\textsf{N}}  in $\R^n$, two heteroclinic cycles share a heteroclinic connection $\left[ \xi_1 \rightarrow \xi_2 \right]$.
If $n_{c_1} + n_{e_2} \geq N$, then there is no (finite or infinite) switching near \textbf{\textsf{N}}. 
\end{proposition}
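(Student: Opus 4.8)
The plan is to rule out switching along the common connection $[\xi_1\to\xi_2]$, which suffices: switching near $\textbf{\textsf{N}}$ would in particular, for every finite path of the form $[\xi_i\to\xi_1\to\xi_2\to\xi_k]$, produce a trajectory following it, and that is precisely switching along $[\xi_1\to\xi_2]$ in the sense of Definition~\ref{def:swit-con}. So I would fix an incoming connection $\kappa_{i,1}$ to $\xi_1$ (there are $n_{c_1}$ of these) and an outgoing connection $\kappa_{2,k}$ from $\xi_2$ (there are $n_{e_2}$ of these), transport everything onto a cross section transverse to $\kappa_{1,2}$, and compose the local map $\phi_{i,1,2}$ at $\xi_1$ with the global map $\psi_{1,2}$. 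Then a trajectory follows $[\xi_i\to\xi_1\to\xi_2\to\xi_k]$ if and only if $\psi_{1,2}(F_{i,1,2})\cap C_{1,2,k}$ meets every $B_\delta$, and switching along $[\xi_1\to\xi_2]$ demands this for all admissible $i$ and $k$.

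Next I would record the shapes of the two families, as in the proof of Proposition~\ref{prop:swit-node}. Because all eigenvalues at the nodes are real, $F_{i,1,2}$ is an intersection of $n_{c_1}+n_{t_1}-1$ cusps whose symmetry axis is the local-contracting direction at $\xi_1$ singled out by $\kappa_{i,1}$; letting $i$ range over the incoming connections these are sectors about the $n_{c_1}$ contracting coordinate axes at $\xi_1$. Dually, $C_{1,2,k}$ is an intersection of $n_{e_2}-1$ cusps with symmetry axis the local-expanding direction at $\xi_2$ singled out by $\kappa_{2,k}$, and letting $k$ range these are sectors about the $n_{e_2}$ expanding coordinate axes at $\xi_2$. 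Since $\psi_{1,2}$ is a rescaled permutation it carries cusps to cusps, so on the common, $N$-dimensional cross section all of these sets remain intersections of cusps, and switching along $[\xi_1\to\xi_2]$ forces the $n_{c_1}$ pairs of thin/thick cusps coming from $\{F_{i,1,2}\}_i$ and the $n_{e_2}$ pairs coming from $\{C_{1,2,k}\}_k$ to intersect all-to-all. That is $n_{c_1}+n_{e_2}$ pairs of cusps, and as $n_{c_1}+n_{e_2}\ge N\ge 2$ for a quasi-simple heteroclinic network, Proposition~\ref{lem:n-cusps} makes this impossible once $B_\delta$ is small enough. Hence some pair $(i,k)$ admits no such trajectory, switching along $[\xi_1\to\xi_2]$ fails, and so does switching near $\textbf{\textsf{N}}$.

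The step I expect to be the real obstacle is matching the inequality $n_{c_1}+n_{e_2}\ge N$ to the hypothesis of Proposition~\ref{lem:n-cusps}: one must check that these $n_{c_1}+n_{e_2}$ pairs of cusps are genuinely ``independent'' on the cross section --- that their symmetry axes and tangency hyperplanes jointly involve enough of the coordinate directions --- so that no accidental product structure makes every $\psi_{1,2}(F_{i,1,2})\cap C_{1,2,k}$ trivially non-empty. This is where the rigidity of quasi-simple cycles is used: each $P_j$ has a one-dimensional expanding part at $\xi_j$, so the rescaled permutation $\psi_{1,2}$ along $\kappa_{1,2}\subset P_1$ ties the contracting directions at $\xi_1$ to the non-radial directions at $\xi_2$ in a way that forces the two families of axes to overlap in sufficiently many coordinates; for the networks cited just before the statement (Kirk--Silber, and the RSP and RSPLS networks) this can be checked by hand. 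With that bookkeeping in place the count goes through, and the corollary extending the statement to a common \emph{sequence} of connections follows by iterating the argument node by node along the shared sub-path, each intermediate node contributing further pairs of cusps to the forced all-to-all intersection.
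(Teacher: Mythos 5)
Your proposal follows essentially the same route as the paper's proof: reduce to the absence of switching along the shared connection $\left[\xi_1\rightarrow\xi_2\right]$, describe the sets $F_{i12}$ and $C_{12k}$ as intersections of cusps, use that the global map is a rescaled permutation so that cusps are carried to cusps, count $n_{c_1}+n_{e_2}\geq N$ pairs of cusps on the $N$-dimensional cross section and invoke Proposition~\ref{lem:n-cusps}. The ``independence'' bookkeeping you single out as the remaining obstacle is not carried out in the paper either: its proof applies Proposition~\ref{lem:n-cusps} directly, which is stated with no such hypothesis, and the only case distinction it makes is $n_{t_1}=0$ versus $n_{t_1}>0$ (the latter only shrinking the sets $F_{i12}$), so on this point your attempt is, if anything, more candid than the published argument rather than less complete.
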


\begin{proof}
We show that there  is no switching near \textbf{\textsf{N}} by proving that there is no switching along the connection $\left[ \xi_1 \rightarrow \xi_2 \right]$.

Assume for now that $n_{t_1}=0$. Then the
cross section $H_1^{\text{out},2}$ is the union of $n_{c_1}$ sets $F_{i12}$, one for each incoming connection at $\xi_1$. Analogously, the 
cross section $H_2^{\text{in},1}$ is the union of $n_{e_2}$ sets $C_{12k}$, one for each outgoing connection from $\xi_2$. Note that when $n_{c_1}=1$ there is only one set $F_{i12}$ which is the whole of $H_1^{\text{out},2}$. If $n_{c_1}=2$ there are two sets $F_{i12}$ and they are a pair of cusps. When $n_{c_1}>2$ the $n_{c_1}$ sets $F_{i12}$ are contained in nested cusps.

According to Definition~\ref{def:swit-con}, there is switching along a connection if the images of the $n_{c_1}$ sets $F_{i12}$ under the global map along the connection intersect all-to-all the $n_{e_2}$ sets $C_{12k}$.

For quasi-simple cycles the global map is a rescaled permutation of the local coordinate axes, ensuring that the intersection of the images of the $n_{c_1}$ sets $F_{i12}$ with the $n_{e_2}$ sets $C_{12k}$ is contained in the intersection of $n_{c_1}+n_{e_2}$ pairs of cusps. By hypothesis $n_{c_1}+n_{e_2}\geq N$ and, from Proposition~\ref{lem:n-cusps}, at least two of the cusps do not intersect. 
Hence, there is no switching along the connection.

If $n_{t_1}>0$ then the sets $F_{i12}$ are smaller than if $n_{t_1}=0$ and the result is proved.
\end{proof}

Since $n_{c_1}\geq 1$ and $n_{e_2}\geq 1$, it follows trivially that if the dimension of the cross sections is $N=2$, then there is no switching along any heteroclinic connection.


\begin{corollary}\label{cor:sequence}
If, in Proposition~\ref{prop:swit-con}, the heteroclinic connection $\left[\xi_1\rightarrow \xi_2\right]$ is replaced by $\left[\xi_1 \rightarrow \xi_{j_1} \rightarrow \xi_{j_2} \cdots \rightarrow \xi_{j_k} \rightarrow \xi_2 \right]$, with $k \in \N$ non-distribution nodes $\xi_{j_i}$, then there is no (finite or infinite) switching near \textbf{\textsf{N}}.
\end{corollary}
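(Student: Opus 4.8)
The plan is to reduce the case of a common sequence of connections to the case of a single common connection treated in Proposition~\ref{prop:swit-con}. The key observation is that the composition of the local and global maps along the path $\left[\xi_1 \rightarrow \xi_{j_1} \rightarrow \cdots \rightarrow \xi_{j_k} \rightarrow \xi_2\right]$ carries $H_1^{\textnormal{out},j_1}$ into $H_2^{\textnormal{in},j_k}$, and since for a quasi-simple heteroclinic network every global map is a rescaled permutation and every local map $\phi_{\cdot,\cdot,\cdot}$ sends cusps to cusps (because all eigenvalues along the cycle are real), the whole composite map takes a pair of cusps in $H_1^{\textnormal{out},j_1}$ to a pair of cusps in $H_2^{\textnormal{in},j_k}$. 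Thus from the point of view of switching, the sequence behaves exactly like a single connection: the relevant sets $F_{i1j_1}$ are pushed forward to cusps in $H_2^{\textnormal{in},j_k}$ that must intersect all-to-all the $n_{e_2}$ cusps $C_{1,2,k}$ (abusing notation for the sets along the outgoing connections at $\xi_2$) for switching along the sequence to occur.

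First I would set up the composite map $\Psi = \psi_{j_k,2}\circ\phi_{j_{k-1},j_k,2}\circ\cdots\circ\phi_{1,j_1,j_2}\circ\psi_{1,j_1}$ from a suitable subset of $H_1^{\textnormal{out},j_1}$ into $H_2^{\textnormal{in},j_k}$, and note, using the explicit formulas recalled in Section~\ref{sec:no-infinite-switching}, that each factor maps (thin or thick) cusps to (thin or thick) cusps: the global maps do so because they are rescaled permutations of coordinate axes, and the local maps do so because, with only real eigenvalues, the power-map form of $\phi_{i,j,k}$ takes the defining inequality of a cusp to another inequality of the same type. Next I would invoke, exactly as in the proof of Proposition~\ref{prop:swit-con}, that switching along the sequence requires the images of the $n_{c_1}$ sets $F_{i1j_1}$ under $\Psi$ to intersect all-to-all the $n_{e_2}$ sets $C_{1,2,k}$ in $H_2^{\textnormal{in},j_k}$, which confines the relevant configuration to an intersection of $n_{c_1}+n_{e_2}$ pairs of cusps in a space of dimension $N$. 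Since $n_{c_1}+n_{e_2}\geq N$ by hypothesis, Proposition~\ref{lem:n-cusps} forbids an all-to-all intersection, so there is no switching along the sequence and hence none near \textbf{\textsf{N}}. The case $n_{t_{j_i}}>0$ at intermediate nodes, or $n_{t_1}>0$, only shrinks the relevant sets, so the conclusion is unaffected.

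I expect the main obstacle to be the bookkeeping needed to verify that the composition of local maps along a genuinely multi-step path preserves the cusp structure while tracking which coordinate plays the role of the symmetry axis at each section; in particular one must check that intermediate nodes do not generate so many new cusps that the dimension count is lost — but this is exactly the content of Theorem~\ref{th:no-switching}'s argument, namely that composing the real-eigenvalue local maps keeps everything an intersection of cusps, with the number of cusps controlled by $N$. Once this is in hand, the dimension inequality $n_{c_1}+n_{e_2}\geq N$ together with Proposition~\ref{lem:n-cusps} closes the argument in one line, exactly mirroring the single-connection case.
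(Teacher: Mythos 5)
Your argument is correct and coincides with the paper's intended reasoning: the paper states this corollary without proof, regarding it as immediate from Proposition~\ref{prop:swit-con} together with the fact (used in Theorem~\ref{th:no-switching}) that compositions of the real-eigenvalue local maps and the rescaled-permutation global maps send cusps to cusps. Your write-up simply makes that reduction explicit, so it matches the paper's approach.
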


Notice that the information provided by Propositions~\ref{prop:swit-node} and \ref{prop:swit-con} when applied to the Kirk and Silber network in \cite{KS} is not equivalent. Proposition~\ref{prop:swit-node} informs that after taking a turn around any of the heteroclinic cycles, the outgoing available options are no longer all feasible, whereas Proposition~\ref{prop:swit-con} shows that the limitation in choice occurs along the heteroclinic connection $\left[\xi_1\rightarrow \xi_2\right]$: it is not possible to arrive at a neighbourhood of $\xi_1$ and leave a neighbourhood of $\xi_2$ in all available combinations.


The next result provides 
an upper bound for the number of distribution nodes that can exist in a sequence of heteroclinic connections for which there is switching. It applies to sequences of heteroclinic connections involving multiple distribution nodes for which Corollary~\ref{cor:sequence} is inconclusive.
Let there be a distribution node, without loss of generality, $\xi_{j_0}$, in the heteroclinic network. A sequence of connections from $\xi_{j_0}$ with $m$ distribution nodes is $\left[\xi_{j_0} \rightarrow \cdots \rightarrow \xi_{j_1} \rightarrow \cdots \rightarrow \xi_{j_2} \rightarrow \cdots \rightarrow \xi_{j_m} \right]$. 
Note that there may be other (non-distribution) nodes in between the distribution nodes $\xi_{j_i}$ with $i=1, \ldots, m$.

\begin{theorem}\label{th:finite-switching}
In $\R^n$ let $\xi_{j_0}$ be a distribution node in a quasi-simple heteroclinic network.
If there is switching along a heteroclinic path from $\xi_{j_0}$ with $m\geq 0$ additional distribution nodes
then $\sum_{i=1}^m (n_{c_{j_{i-1}}}+n_{e_{j_i}}) < N$.
\end{theorem}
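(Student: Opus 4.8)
The plan is to mimic the structure of the proof of Proposition~\ref{prop:swit-con}, but iterated along the chain of distribution nodes, tracking at each step how many cusps the relevant domain is forced to be an intersection of. Fix a point in the incoming cross section at $\xi_{j_0}$ and let its trajectory follow the sequence $\left[\xi_{j_0} \rightarrow \cdots \rightarrow \xi_{j_1} \rightarrow \cdots \rightarrow \xi_{j_m}\right]$. Recall from Section~\ref{sec:no-infinite-switching} that the domain $C_{ijk}$ of a local map is an intersection of $n_{e_j}-1$ cusps, its image $F_{ijk}$ is an intersection of $n_{c_j}+n_{t_j}-1$ cusps, and the global maps are rescaled permutations, so they carry cusps to cusps and intersections to intersections. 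Consequently, passing through the distribution node $\xi_{j_i}$ and then along a connection to the next relevant cross section replaces the current set (an intersection of, say, $r$ cusps) by an intersection of at most $r + (n_{c_{j_i}}-1) + (n_{e_{j_{i-1}}}-1) + \text{(contributions from non-distribution nodes, which add further cusps but only help)}$; the bookkeeping I actually need is the cumulative lower bound $\sum_{i=1}^\ell (n_{c_{j_i}} + n_{e_{j_{i-1}}})$ on the number of cusps whose all-to-all intersection switching along the first $\ell$ legs would demand.

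The key steps, in order: (1) set up coordinates on the cross sections along the sequence as in Section~\ref{sec:no-infinite-switching}, noting that quasi-simplicity makes all these cross sections live in $\R^N$; (2) observe that, by Definitions~\ref{def:swit-node} and \ref{def:swit-con}, switching along the whole sequence forces, at each distribution node $\xi_{j_i}$, the image under the return of all the sets $C_{\cdot j_i \cdot}$ associated to the outgoing connections to meet all-to-all the incoming cusp structure carried over from $\xi_{j_{i-1}}$ — exactly the $n_{c_{j_i}}+n_{e_{j_{i-1}}}$ contribution, as in the proof of Proposition~\ref{prop:swit-con} applied to the leg $\left[\xi_{j_{i-1}}\rightarrow\cdots\rightarrow\xi_{j_i}\right]$ (here Corollary~\ref{cor:sequence} supplies the reduction for the intermediate non-distribution nodes); (3) sum these contributions: after $\ell$ distribution nodes, switching requires an all-to-all intersection of at least $\sum_{i=1}^\ell (n_{c_{j_i}}+n_{e_{j_{i-1}}})$ pairs of cusps inside a fixed $\R^N$; (4) invoke Proposition~\ref{lem:n-cusps}: once this sum reaches $N$, the intersection is empty, so switching must already have failed; hence $m$ cannot exceed the smallest $k$ with $\sum_{i=1}^{k}(n_{c_{j_i}}+n_{e_{j_{i-1}}}) \geq N$; (5) the lower bound $m\geq 0$ is vacuous — it merely records that the sequence might contain no distribution node beyond $\xi_{j_0}$, in which case the question of switching past it does not arise.

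The main obstacle I expect is Step~(2)–(3): making the "intersections of cusps accumulate additively under composition of local and global maps" claim precise enough to be rigorous rather than heuristic. One has to be careful that the rescaled-permutation global maps genuinely preserve the cusp structure (true by the computation in \cite{GdSC}), that the exponents $\alpha$ appearing when pulling a cusp back through a local map stay $>1$ so the image is still a bona fide cusp in the sense of Definition~\ref{def:thin-cusp}, and — most delicately — that no two of the accumulated cusps "merge" into the same coordinate pair in a way that would let Proposition~\ref{lem:n-cusps}'s count be evaded; but that last point is precisely what Lemma~\ref{lem:2cusps} handles, since coincident index pairs make the intersection \emph{smaller}, not larger. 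A secondary subtlety is the presence of transverse eigenvalues at $\xi_{j_{i-1}}$: then $H_{j_{i-1}}^{\text{out},\cdot}$ is not a union of the $F$-sets, but, exactly as in the last line of the proof of Proposition~\ref{prop:swit-con}, the true sets are only smaller, so the bound is unaffected. Once these points are in place, the theorem follows by the same finiteness-of-all-to-all-intersections mechanism used for Theorem~\ref{th:no-switching}.
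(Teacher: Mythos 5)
Your proposal is correct and follows essentially the same route as the paper: the paper's own proof is a one-liner that dismisses the lower bound as trivial and obtains the upper bound by ``proceeding as in the proof of Proposition~\ref{prop:swit-con}'', i.e.\ accumulating the $n_{c_{j_i}}+n_{e_{j_{i-1}}}$ pairs of cusps along the legs of the sequence and invoking Proposition~\ref{lem:n-cusps} once the count reaches $N$. Your more explicit bookkeeping (rescaled permutations preserving cusps, the transverse-eigenvalue case only shrinking the sets) simply fills in details the paper leaves implicit.
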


\begin{proof}
The inequality is obtained by proceeding as in the proof of Proposition~\ref{prop:swit-con} in a contrapositive way. The upper bound of the sum follows from Proposition~\ref{prop:swit-node}.
\end{proof}

%

We end the section with some illustrative examples. 

\begin{example}[The house network in \cite{CastroLohse2016}]
The common connection $[\xi_1 \rightarrow \xi_2]$ of the house network in Figure~\ref{fig:House} satisfies $n_{c_1}=2$ and $n_{e_2} =2$. The dimension of the cross sections is $N=3$. Since $n_{c_1}+ n_{e_2}=2+2>3=N$, Proposition~\ref{prop:swit-con} guarantees no switching near this heteroclinic network. 

Such a result rectifies Lemma 3.7 in \cite{CastroLohse2016} and is consistent with the correction in~\cite{tese} that was first made to the intersection of the sets $F_{i12}$ with $C_{12k}$ in~\cite{CastroLohse2016}. 
\end{example} 

\begin{example}[The 2-person RSP game of \cite{GSC_RSP}]
The quotient RSP network 
reproduced in 
Figure~\ref{fig:RSP}~(a)
exists in $\R^4$. 
Cross sections have dimension
$N=3$. 
At any node $\xi_i$ we see that $n_{c_{i-1}}=2$ and $n_{e_i}=2$. 
For $k=1$ in Theorem~\ref{th:finite-switching}, we have $n_{c_{i-1}}+n_{e_i}>N$ and thus one is the maximum number of distribution nodes that is reached from any starting node. Hence, there is no switching along any of the heteroclinic connections in this network. 

Note that we could also have applied Proposition~\ref{prop:swit-con}.
A proof of 
the absence of switching by direct (and long) computation can be found in~\cite{Cesary}.

We include this example to correct a claim to infinite switching made in \cite{AC} due to the use of inappropriate global maps (see \cite{GSC_RSP} for a correct version of the global maps).

The same conclusion can be drawn for the extension of the RSP by introducing two more possibilities, Lizard and Spock, where the RSPLS network in Figure~\ref{fig:RSP}~(b) arises in $\R^5$. Even though \cite{PosRuc2022} report on various finite heteroclinic paths (root sequences) that can be observed near the RSPLS network, just not for the same parameter values, our results state that not all possible finite heteroclinic paths are followed regardless parameter values.

\begin{figure}[!htb]
\centering
\subfloat[]{\includegraphics{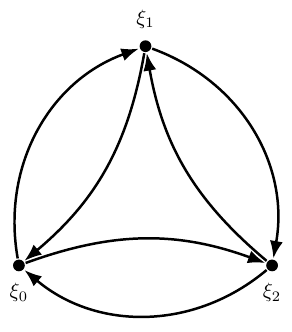}}
\hfill
\subfloat[]{\includegraphics{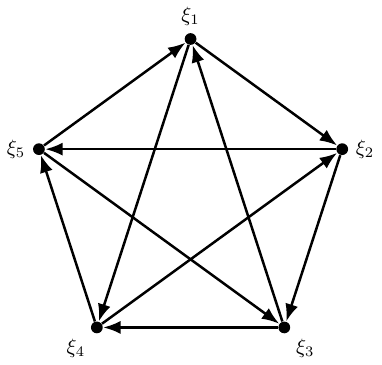}}
\caption{The heteroclinic networks (a) from~\cite{GSC_RSP} and (b) from~\cite{CFGdSL}. All nodes are distribution nodes.}
\label{fig:RSP}
\end{figure}

\end{example}

\begin{example}[The bowtie network in \cite{CastroLohse2016}]
The lowest dimension for the construction of the bowtie network in Figure~\ref{fig:bowtie} is $\R^5$. Cross sections are 3-dimensional 
and only $\xi_2$ is a distribution node. 
From Theorem~\ref{th:finite-switching} we obtain $n_{c_{j_{i-1}}}+n_{e_{j_{i}}}=n_{c_2}+n_{e_2}=2+2>N=3$. 
Therefore, trajectories starting at one of the incoming cross sections of $\xi_2$ will have a predetermined outgoing cross section after one return.
\end{example}

\begin{example}\label{ex:triangle}
The heteroclinic network in Figure~\ref{fig:doubleRSP} can be generated in~$\R^6$ with the simplex method from~\cite{AP}. Cross sections transverse to the flow are reduced to dimension $N=4$.
The distribution nodes are~$\xi_1$,~$\xi_2$ and~$\xi_6$.

At $\xi_1$ we have $n_{c_2}+n_{e_1}=1+2 < N$, and at $\xi_2$ we have $n_{c_2}+n_{e_6}=1+2 < N$.  
But, in Theorem~\ref{th:finite-switching}, $\sum_{i=1}^2 (n_{c_{j_{i-1}}}+n_{e_{j_{i}}}) = 6 \geq N$.
Accordingly, trajectories starting at the incoming cross section of $\xi_1$ can follow any outgoing cross section of $\xi_2$ but will have a predetermined outgoing cross section at the next distribution node (this might be $\xi_6$ or again $\xi_1$).
The sequences with two distribution nodes from $\xi_1$ are
$\left[\xi_1 \rightarrow \xi_{2} \rightarrow \xi_5 \rightarrow \xi_{6} \right]$,
$\left[\xi_1 \rightarrow \xi_{2} \rightarrow \xi_3 \rightarrow \xi_{1} \right]$
and
$\left[\xi_1 \rightarrow \xi_{4} \rightarrow \xi_5 \rightarrow \xi_{6} \right]$.

Analogous behaviour occurs at the other distribution nodes.

\begin{figure}[!htb]
\centering
\includegraphics{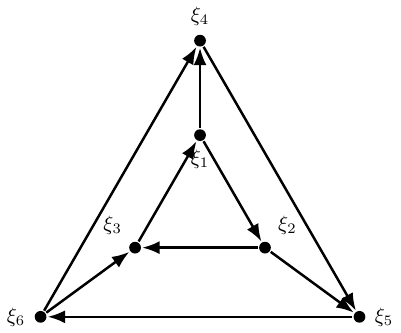}
\caption{A heteroclinic network in $\R^6$. The nodes $\xi_1$, $\xi_2$ and $\xi_6$ are distribution nodes.}
\label{fig:doubleRSP}
\end{figure}
\end{example}


\section{Final remarks}\label{sec:final}

The complexity of the behaviour in a neighbourhood of a heteroclinic network depends on the type of eigenvalue, real or complex, of the Jacobian matrix at each node. It is known that complex eigenvalues induce infinite switching and therefore chaos, but the novelty is that real eigenvalues can lead to rich, but not chaotic, dynamics near the heteroclinic network.

While we have established the absence of infinite switching near a generic class of heteroclinic networks, we hope to have also encouraged the scientific community to look for different types of finite switching. The examples at the end of the previous section show that there are several ways in which infinite switching fails to exist. 
Several forms of finite switching exist, that is, a wide variety of finite heteroclinic paths can be followed by trajectories near the RSPLS network for a specific set of parameter values such that the heteroclinic cycles in the network are unstable. See \cite{CFGdSL} and \cite{PosRuc2022}.

Furthermore, chaotic behaviour might be observed at a distance from the heteroclinic network. This may well be attracting for trajectories starting 
close the heteroclinic network provided 
the latter is not asymptotically stable. One such 
case is 
found by Sato {\em et al.}~\cite{SAC} and depicted in their Figure~13. The parameter values for this figure belong to a region where none of the heteroclinic cycles possesses any stability as determined in~\cite{GSC_RSP}. 

\paragraph{Acknowledgements:} The authors thank A.\ Lohse for his comments on a previous version.

Both authors were partially supported by CMUP, member of LASI, which is financed by national funds through FCT -- Funda\c{c}\~ao para a Ci\^encia e a Tecnologia, I.P., under the project reference UID/MAT/00144/2020. Some of the work was done while L.\ Garrido-da-Silva was the recipient of the doctoral grant PD/BD/105731/2014 from FCT (Portugal).

\end{document}